\documentclass[12pt,a4paper]{amsart}
 \usepackage{amssymb,amsthm}
 \usepackage{multirow}
 \usepackage{dsfont}
 \usepackage{graphicx}
 \usepackage{float}
 \newtheorem{thm}{Theorem}[section]
\newtheorem{cor}[thm]{Corollary}
\newtheorem{prop}[thm]{Proposition}
\newtheorem{fact}[thm]{Fact}
\newtheorem{lem}[thm]{Lemma}

\newtheorem{definition}{Definition}

\theoremstyle{definition}

 \begin{document}
 \title
 {$k$-Color Region Select Game}
 \author{Ahmet Batal, Neslihan G\"ug\"umc\"u}
 \address{Department of Mathematics\\ Izmir Institute of Technology\\ G\"ulbah\c ce Campus 35430 Izmir, TURKEY}
\email{ahmetbatal@iyte.edu.tr}
\email{neslihangugumcu@iyte.edu.tr}
\begin{abstract}
The region select game, introduced by Ayaka Shimizu,  Akio Kawauchi and Kengo Kishimoto, is a game that is played on knot diagrams whose crossings are endowed with two colors. The game is based on the region crossing change moves that induce an unknotting operation on knot diagrams. We generalize the region select game to be played on a knot diagram endowed with $k$-colors at its vertices for $2 \leq k \leq \infty$.

\end{abstract}

\subjclass[2020]{05C50, 05C57}
\keywords{knot, link, region select game, unknotting}
 \maketitle

 \section*{Introduction}

 The \textit{region select game} that was produced in 2010 \cite{Shi2, Shi} and later released as a game app for Android \cite{And}, is a game played on knot diagrams. The region select game begins with a knot diagram that is initially endowed with two colors, either by $0$ or $1$, at its crossings, and played by selecting a number of regions (an area enclosed by the arcs of the diagram) of the knot diagram. Each choice of a region  of the diagram results in the crossing colors which lie on the boundary of the region to be increased by $1$ modulo $2$. The aim of the game is to turn the color of each crossing of the knot diagram to $0$ (or to $1$) by selecting a number of regions. Shimizu showed \cite{Shi} that the region select game is always solvable, that is, for any initial color configuration of crossings there exists a choice of regions which turns the color of each crossing to $0$. In \cite{Shi} a \textit{region crossing change} move is defined to be a local transformation of the knot diagram that is applied on a region and  changes the type of each crossing that lie on the boundary of the region. By encoding an over-crossing with $1$ and an under-crossing with $0$, it is clear that any knot diagram corresponds to a knot diagram given with an initial color configuration at its crossings. The solvability of the region select game follows from the result of Shimuzi that any knot diagram can be turned into an unknot diagram by a sequence of region crossing change moves \cite{Shi}. In \cite{Che}, Cheng and Gao showed that the result holds for two-component link diagrams if and only if their linking numbers are even.

Soon after in 2012, Ahara and Suzuki \cite{AhSu} extended the region select game to an integral setting by introducing the \textit{integral region choice problem}. In the integral choice problem,  one starts the game with a knot diagram that is endowed with colors  labeled by integers at its crossings. Then, an integer is assigned to a region of the knot diagram. The assigned integer on the region changes the integer label on the crossings that lie in the boundary of the region according to two counting rules. In the first counting rule, named as \textit{the single counting rule}, the integer label on each crossing of the boundary of the integer-labeled region is increased by $n$, where $n$ is the integer assigned to the region. In the second counting rule, named as \textit{the double counting rule}, when an integer is assigned to a region, the integer labels on the crossings of the boundary that meet with the region once are increased by $n$, and the integer labels on the crossings of the boundary that meet with the region twice are increased by $2n$. In \cite{AhSu}, the authors showed that the integral region choice problem considered with respect to both of these rules is always solvable. In \cite{Kaw}, Kawamura gave a necessary and sufficient condition for the solvability of two-component links diagrams.

In this paper, we introduce the $k$-color region select game that is the modulo $k$ extension of Shimizu's region select game, when $k$ is an integer greater than $2$.  In this game, crossings of a knot diagram are initially colored by integers $0,1,...,k-1$. The game is played by pushing (selecting) a number of regions of the knot diagram. Each push of a region increases the color of the crossings at the boundary of the region by $1$ modulo $k$. The aim of the game is to make the color of every crossing $0$ by applying a push pattern to the regions. See Figure \ref{fig:example} for a knot diagram given with an initial coloring configuration. The integers on the regions of the knot diagram denote the required number of pushes on them to turn each vertex color to $0$ modulo $3$. Similar to the integral region choice problem of Ahara and Suzuki, we also define versions of the game for the cases $2 \leq k< \infty$ and $k=\infty$ with modified rules of counting.

\begin{figure}[H]
\centering
\includegraphics[scale=.2]{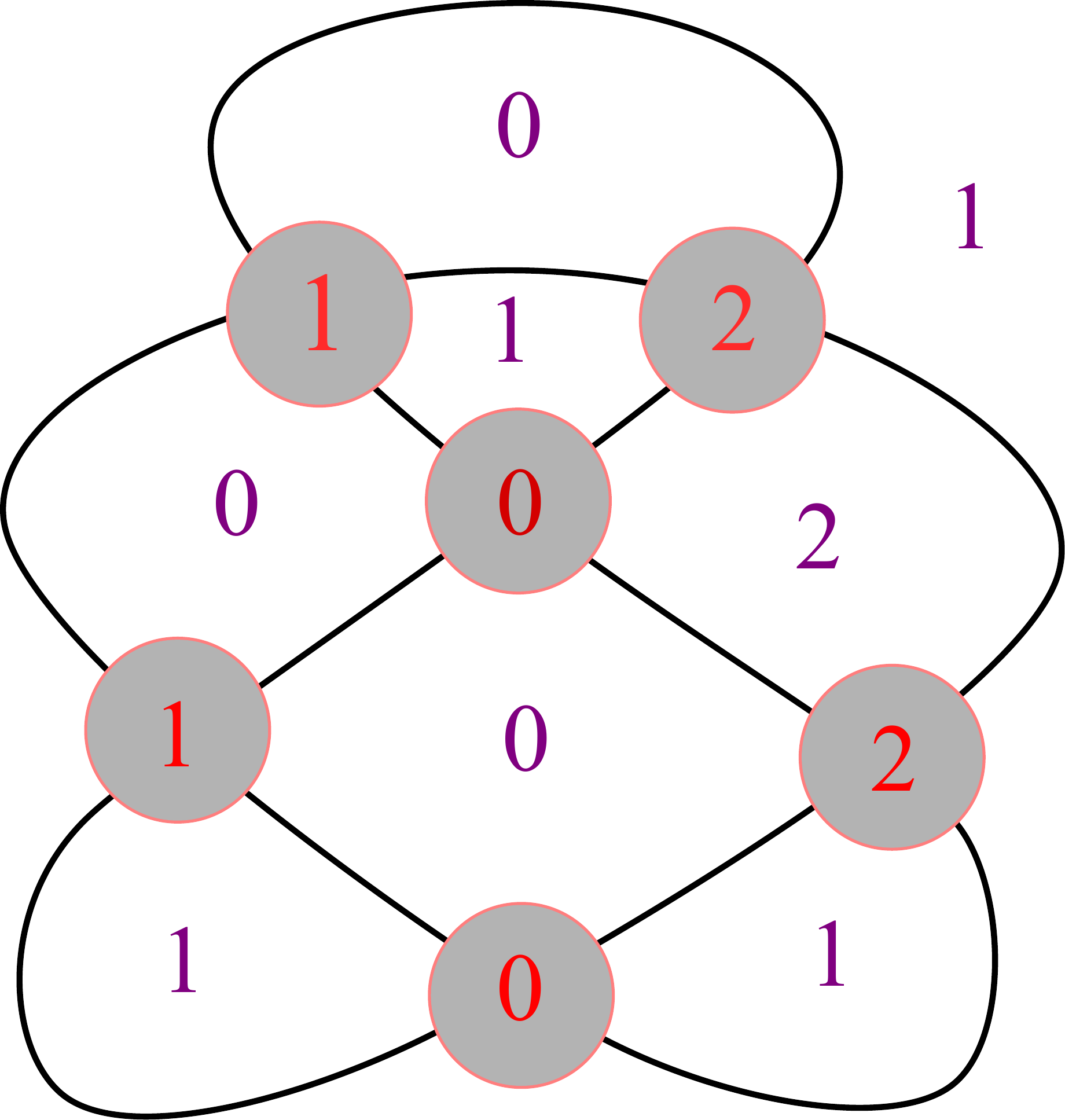}
\caption{A solution of a 3-color region select game played on a diagram of the knot $6_2$ \cite{Rotable}}
\label{fig:example}
\end{figure}

Let us now give an outline of our paper.

In Section \ref{sec:prem}, we review fundamental notions from knot theory and graph theory that are required throughout the paper.  In Section \ref{sec:game}, we introduce the $k$-color region select game both for an integer $k$ that is greater than or equal to $2$ and for $k=\infty$.

In Section \ref{sec:solvable} we prove that any version of the $k$-color region select game introduced in this paper, is always solvable on knot diagrams. In Sections \ref{sec:reduced} and \ref{sec:assertions} we examine the number of solving patterns for a given initial coloring configuration that are obtained without pushing certain regions of a knot diagram.

We note here that the always solvability of the $k$-color region select game with the versions corresponding to the single and double counting rule, can be directly deduced from the always solvability of the integral region choice problem.  However this does not make our proof redundant.
 In fact,  the proofs of the always solvability of the integral region choice problem and the original ($2$-color) region select game are mostly knot theoretic where Reidemeister moves and checkerboard shadings of knot diagrams are used. On the other hand, our proof utilizes mostly linear algebra and few fundamental facts on regular curves (indeed we almost only utilize the fact that a knot diagram is an orientable closed curve). This enables us to prove the always solvability of the other versions of the region select game that are introduced in this paper that cannot be drawn directly from the arguments in \cite{AhSu}. In particular, with our proof method we also prove the always solvability of the integral region choice problem, not only for the single and double counting rule, but also for any arbitrary counting rule. With the arguments in our paper, the following questions are also answered.

\begin{enumerate}
\item How many solutions are there for a given initial color configuration?

\item  How many initial color configuration can we solve without pushing certain regions?

\item  Do there exist certain regions such that any initial color configuration can be solved without pushing them?

\item Do the answers of the above questions depend on the value of $k$, the version of the game, and the type of the knot diagram? If so, how?

\end{enumerate}

\section{Preliminaries}\label{sec:prem}

We shall begin by presenting basic definitions that we will be using throughout the paper.

\begin{definition}\normalfont
A \textit{link} with $n$ components is a smooth embedding of a union of $n$ unit circles, $S^1$ into $\mathbb{R}^3$, where $n \geq 1$.  In particular, a link with one component is called a \textit{knot}.

\end{definition}

\begin{definition}\normalfont
A \textit{link diagram} (or a \textit{knot diagram}) $D$ is a regular projection of a link (or a knot) into the $2$-sphere, $S^2$ with a finite number of transversal self-intersection points. Each self-intersection point of the projection curve is endowed either with over or under passage information to represent the weaving of the link in $\mathbb{R}^3$, and is called a \textit{crossing} of $D$.

\end{definition}




\begin{definition}\normalfont
A crossing of a link diagram is called \textit{reducible} if there exists a circle in the plane of the diagram that meets the diagram transversely only at that crossing. A crossing is called \textit{irreducible} if it is not reducible.

\end{definition}

\begin{definition} \normalfont
We call a component of a \textit{link diagram} without any crossing on it a \textit{loop}. 
\end{definition}
It is clear that a loopless link diagram with $n$ crossings overlies a unique planar graph with $n$ four-valent vertices that is obtained by ignoring the weaving information at the crossings. By abusing the terminology,  we extend this association to any link diagram by considering each loop component as a graph with one edge and no vertices. We also call the underlying graph of a link or a knot diagram a \textit{link diagram} or a \textit{knot diagram}, respectively. By a simple application of the Jordan curve theorem and Euler's formula, one can see that any knot diagram with $n$ vertices divides $S^2$ into $n+2$ regions for $n \geq 0$.

\begin{definition}
\normalfont
 For a link diagram $D$ on $S^2$, \textit{regions} of $D$ are defined as the connected components of $S^2 \backslash D$. A vertex $v$ (an edge $e$) is said to be \textit{incident} to a region $r$ and vice versa if $v$ ($e$, respectively) is in the boundary of $r$. Two regions of $D$ are called \textit{adjacent} if they are incident to the same edge. Similarly,  two edges of $D$ are called  \textit{adjacent} if they are incident to the same vertex.
\end{definition}

\begin{definition}\normalfont
Let $D$ be a link diagram.
The \textit{dual graph} of $D$ is the graph obtained by adding a vertex to each region of $D$ and an edge between each pair of vertices that lie on adjacent regions.
\end{definition}

\begin{figure}[H]
\centering
\includegraphics[scale=.2]{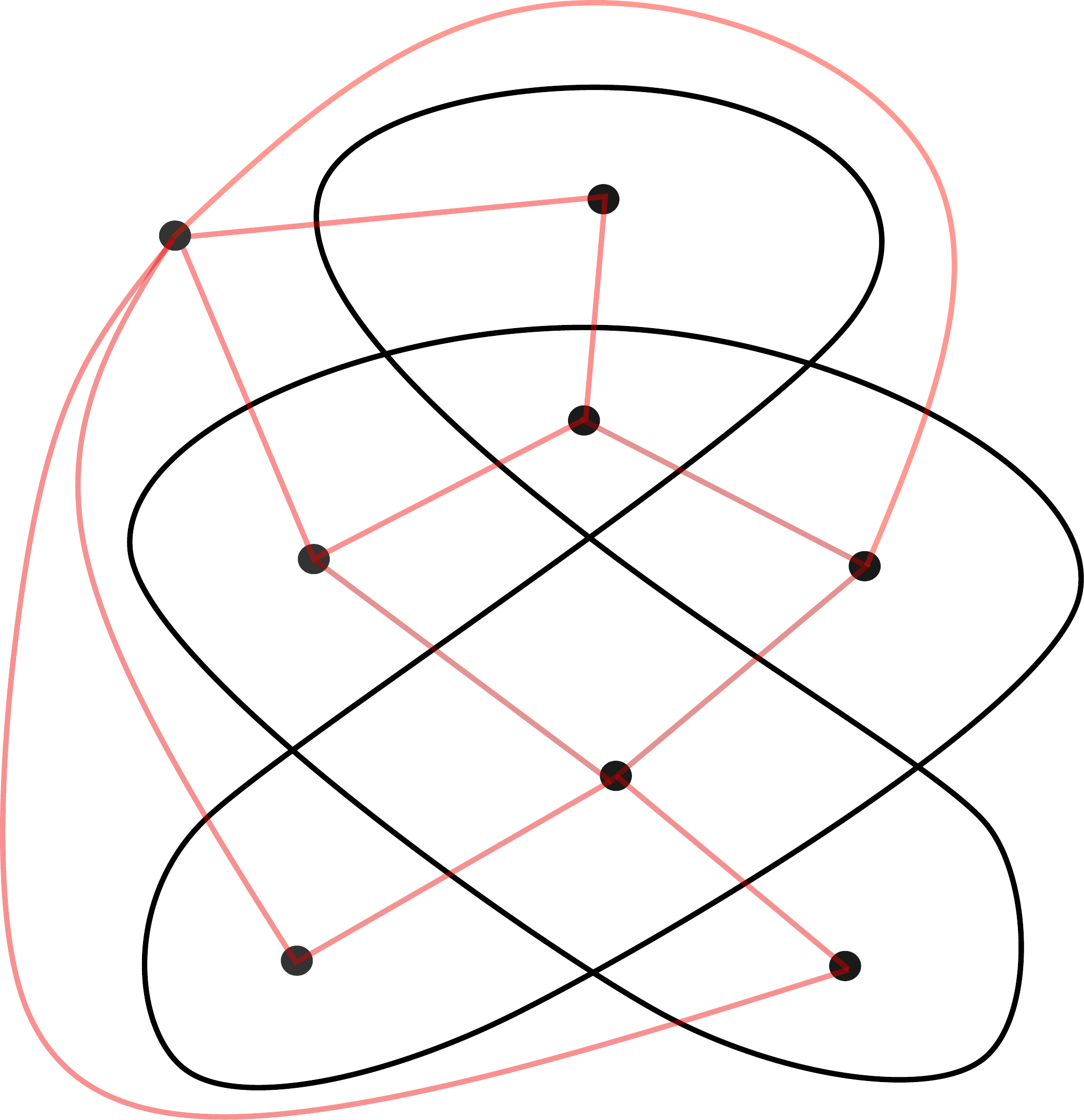}
\caption{The dual graph of a diagram of the knot $6_2$}
\label{fig:example}
\end{figure}

 \section{$k$-color region select game}\label{sec:game}

 In this section, we introduce \textit{$k$-color region select game} as well as its modified versions that are all played on a knot diagram $D$, both for the cases  $2 \leq  k < \infty$ and $k=\infty$.\\

\textit{The $k$-color region select game when $2 \leq  k < \infty$} :\\
We select $k$ colors and give a labeling to these colors as $color\,0,\, color \,1, ..., \\
color\, k-1$. Then we take an initial color configuration of vertices of $D$ by using these colors. The game is played by pushing regions of $D$. When a region is pushed, every vertex incident to the region changes its color by the following rule. The $color \,i$ changes to the $color \,i+1$ for $i\neq k-1$ and the $color\, k-1$ changes to the $color\, 0$. The aim of the game is to reach to the \textit{off color} configuration, in which every vertex is in $color \,0$ state, by applying a push pattern on regions for a given initial color configuration.
\\

\textit{The $k$-color region select game when $k=\infty$}:\\
In this game, we have infinitely many colors labeled as
$...,color\,-2,\, color \,-1,\, color \,0,\, color \,1,\,color \,2,...$. An initial color configuration of vertices of $D$ is obtained by a finite choice of these colors. Each push of a region is assigned to either to $1$ or $-1$, and is called a \textit{positive} or \textit{negative} push, respectively.

When a positive (negative, respectively) push is applied to a region, color label of every vertex incident to  the region increases (decreases, respectively) by $1$. The aim of the game is the same as in the finite case, to reach to the off color configuration by applying a signed push pattern for a given initial color configuration.
\begin{definition}\normalfont
Let $C$ denote an initial color configuration of a link diagram $D$.
If there exists a push pattern $P$ of regions of $D$ which brings $C$ to the off color configuration then $C$ is  called \textit{solvable} and $P$ is a solving pattern for $C$.
\end{definition}
\begin{definition}\normalfont
 If every initial color configuration of vertices of $D$ is solvable then $D$ is called \textit{always solvable} in the $k$-color region select game.
 \end{definition}

Let $D$ have $n$ vertices and $m$ regions and let us enumerate the vertices and regions of $D$ as  $\{v_1,...,v_n\}$, $\{r_1,...,r_m\}$, respectively.
It is easy to observe that the order of the pushes has no importance. Moreover, for $k<\infty$, pushing a region $k$ times is equivalent to not to push it. For $k=\infty$, the net  number of  pushes, that is equal to the sum of signs of the pushes made, 
 is important. Precisely, the color label of the vertices that are incident to the regions pushed change by the net number of pushes.

Let $\mathbb{Z}_k$ denote the quotient ring ${\mathbb{Z}} /{ k \mathbb{Z}}$ when $k<\infty$, and it denotes $\mathbb{Z}$ when  $k=\infty$.

 We identify a push pattern of regions by a column vector $\mathbf{p}=(p_1,..., p_m)^t \in \mathbb{Z}_k^m$ such that $\mathbf{p}(r_i):=p_i$ is the number of times the region $r_i$ is pushed modulo $k$ if $k<\infty$, and the net  number of pushes of $r_i$ if $k=\infty$. Similarly, we identify a color configuration of vertices by a column vector  $\mathbf{c}=(c_1,..., c_n)^t \in \mathbb{Z}_k^n$ such that $\mathbf{c}(v_i)=c_i$ is the label number of the color of the vertex $v_i$ in the configuration.

The $n \times m$ \textit{vertex-region incidence matrix} $M_0=M_0(D)$ of $D$ is constructed as follows \cite{Che}

\begin{align}
(M_0)_{ij}= \left\{ \begin{array}{cc}
                1 &  \;\;\text{if}\;\;\;\; v_i\; \text{is incident to}\; r_j     \\
                0 &  \;\;\text{otherwise}  \\
                \end{array} \right\}.
\end{align}

Let $\mathbf{c}_{in}$ be an initial color configuration of vertices of $D$ and $\mathbf{c}_{fin}$ be the final state configuration obtained after applying a push pattern $\mathbf{p}$. One can observe that the following equation characterizes the relation among $\mathbf{c}_{in}$,$\mathbf{c}_{fin}$, and $\mathbf{p}$ over $\mathbb{Z}_k$  in a simple algebraic way.
\begin{equation}
\label{maineqn}
\mathbf{c}_{in}+M_0(D)\mathbf{p}=\mathbf{c}_{fin}.
\end{equation}

We now introduce the modified versions of the game that are played with the rules explained below. 
\\

 \emph{Modified rules of the game for $k<\infty$}: 
 
 Take a link diagram $D$ and fix some $k<\infty$. Let $v$ be a vertex of $D$.
 \begin{enumerate}
 \item If $v$ is irreducible choose a number $a\in \mathbb{Z}_k$ which is not a zero divisor. Then define the new rule for this vertex such that a push on a region incident to $v$ increases the color label of $v$ by $a$ modulo $k$.
 \item If $v$ is reducible, choose three numbers $a_0, a_1, a_2 \in \mathbb{Z}_k$ such that $a_1$ and $a_2$ are not zero divisors. Let $r_0$, $r_1$, and $r_2$ be the regions incident to $v$ where $r_0$ is the region which touches $v$ from two sides. Then define the rule for this vertex such that a push on the incident region $r_i$ increases the color label of $v$ by $a_i$ modulo $k$ for $i=0,1, 2$. Let us call these numbers we choose for each vertex region pair $v$-$r$, \emph{the increment number of} $v$\emph{ with respect to the region} $r$ or \emph{the increment number of} $v$-$r$ \emph{pair}. Note that the increment number of $v$ is the same with respect to each incident region of $v$ if $v$ is irreducible, but it can be chosen differently for each incident region of $v$ if $v$ is reducible.
     \\
     
 \end{enumerate}

\emph{Modified rules of the game for $k=\infty$}:

\begin{enumerate}
\item

 The increment number of the incident vertex-region pairs $v$-$r$ is taken $1$ as in the original game if $v$ is irreducible, or if $v$ is reducible and $r$ is a region which touches $v$ from one side.

 \item If $v$ is a reducible vertex and $r$ is the region which touches $v$ from two sides, then the increment number of $v$-$r$ pair is allowed to be any number.
 \end{enumerate}

The rules mentioned above and every choice of increment numbers induce different versions of the $k$-color region select game for $2 \leq k\leq\infty$. The game where all increment numbers are taken as $1$ corresponds to the original game, hence the modified versions are generalizations of the original game for $2 \leq k\leq\infty$. Although the complexity of the game is increased by these modifications, it will turn out that always solvability of the game is not affected as we show in Section \ref{sec:solvable}. Therefore, in the sequel, we consider the modified versions of the game.


Note also that in the case of $k=\infty$, we allow the increment number of $v$-$r$ pair where $v$ is a reducible vertex and $r$ is the region which touches $v$ from two sides to be any number. When this number taken as $1$ or $2$ for all reducible vertices, the version corresponds to the integral choice problem for single or double counting rule \cite{AhSu}, respectively.

\begin{definition}\normalfont
 Let $D$ be a link diagram with vertices labeled as $\{v_1,...,v_n\}$ and regions $\{r_1,...,r_{m}\}$  and $G$ be a version of the $k$-color region select game  on $D$ induced by the choice of $k$ and the set of increment numbers. We define the  \textit{game matrix} $M=M(D,G)$ \emph{over} $\mathbb{Z}_k$ \emph{corresponding to the  diagram} $D$ and \emph{the game} $G$ such that $(M)_{ij}$ is equal to the increment number of the vertex $v_i$ with respect to the region $r_j$ if $v_i$ and $r_j$ are incident, and zero otherwise.
 \end{definition}

Similar to the original game, in the game $G$, a final state color configuration $\mathbf{c}_{fin}$ is obtained after applying a push pattern ${\bf p}$ to an initial color configuration $\mathbf{c}_{in}$ if and only if

\begin{equation}
\label{maineqn2}
\mathbf{c}_{in}+M(D,G)\mathbf{p}=\mathbf{c}_{fin} \;\;\text{over} \;\; \mathbb{Z}_k.
\end{equation}

Let us denote the kernel and column space of a matrix $A$ over the ring $\mathbb{Z}_k$ by $Ker_k(A)$ and $Col_k(A)$, respectively. Then, from the above algebraic formulation we immediately obtain the following facts.
\begin{fact}
An initial color configuration $\mathbf{c}$ of the vertices of $D$ is solvable in the game $G$ if and only if $\mathbf{c}\in Col_k(M)$. Indeed, $\mathbf{p}$ is a solving pattern for $\mathbf{c}$ if and only if
\begin{equation}
M\mathbf{p}=-\mathbf{c}.
\end{equation}
\end{fact}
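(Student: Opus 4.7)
The plan is to read the Fact as a direct reformulation of equation (\ref{maineqn2}) in the special case where the target is the off color configuration $\mathbf{0}$. By definition, an initial configuration $\mathbf{c}$ is solvable in $G$ precisely when there exists a push pattern $\mathbf{p}\in \mathbb{Z}_k^m$ whose application transforms $\mathbf{c}$ into $\mathbf{0}$. Using (\ref{maineqn2}) with $\mathbf{c}_{in}=\mathbf{c}$ and $\mathbf{c}_{fin}=\mathbf{0}$, this condition becomes $\mathbf{c}+M\mathbf{p}=\mathbf{0}$, which I rearrange to $M\mathbf{p}=-\mathbf{c}$. This already proves the second assertion of the Fact on the nose.

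For the first assertion, I would then observe that the existence of a solution $\mathbf{p}$ to $M\mathbf{p}=-\mathbf{c}$ is equivalent, by the very definition of the column space, to $-\mathbf{c}\in Col_k(M)$. Since $Col_k(M)$ is a $\mathbb{Z}_k$-submodule of $\mathbb{Z}_k^n$, it is closed under additive inversion, so $-\mathbf{c}\in Col_k(M)$ if and only if $\mathbf{c}\in Col_k(M)$. Chaining these equivalences gives the Fact.

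There is really no obstacle here; the only subtlety worth flagging is the closure of $Col_k(M)$ under negation, which is automatic whether $k<\infty$ (so $\mathbb{Z}_k=\mathbb{Z}/k\mathbb{Z}$) or $k=\infty$ (so $\mathbb{Z}_k=\mathbb{Z}$). The content of the statement is entirely packaged in the already-established equation (\ref{maineqn2}), so the proof reduces to specializing that equation and invoking the definition of the column space.
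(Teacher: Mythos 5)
Your proof is correct and is exactly the reasoning the paper intends: the paper states this Fact without proof as an immediate consequence of equation \eqref{maineqn2}, and your specialization to $\mathbf{c}_{fin}=\mathbf{0}$ together with the (correctly flagged) closure of $Col_k(M)$ under negation fills in precisely that gap.
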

\begin{fact}
\label{fact2}
$D$ is always solvable in $G$ if and only if $Col_k(M)=\mathbb{Z}_k^n$.
\end{fact}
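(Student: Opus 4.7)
The plan is to deduce the statement directly from Fact 1 combined with the definition of ``always solvable.'' By definition, $D$ is always solvable in $G$ precisely when every $\mathbf{c}\in\mathbb{Z}_k^n$ is a solvable initial color configuration. By Fact 1, solvability of a given $\mathbf{c}$ is equivalent to $\mathbf{c}\in Col_k(M)$ (using that $Col_k(M)$ is closed under negation, since it is a $\mathbb{Z}_k$-submodule, the condition $-\mathbf{c}\in Col_k(M)$ from the equation $M\mathbf{p}=-\mathbf{c}$ is the same as $\mathbf{c}\in Col_k(M)$).

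First I would argue the forward direction: assume every initial configuration is solvable. Fix an arbitrary $\mathbf{c}\in\mathbb{Z}_k^n$. By Fact 1 there exists $\mathbf{p}\in\mathbb{Z}_k^m$ with $M\mathbf{p}=-\mathbf{c}$, so $-\mathbf{c}\in Col_k(M)$, hence $\mathbf{c}\in Col_k(M)$. Since $\mathbf{c}$ was arbitrary, $\mathbb{Z}_k^n\subseteq Col_k(M)$. The reverse inclusion $Col_k(M)\subseteq\mathbb{Z}_k^n$ holds trivially because $M$ has $n$ rows, giving the equality $Col_k(M)=\mathbb{Z}_k^n$.

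For the reverse direction I would assume $Col_k(M)=\mathbb{Z}_k^n$ and take any initial configuration $\mathbf{c}\in\mathbb{Z}_k^n$. Then $-\mathbf{c}\in\mathbb{Z}_k^n=Col_k(M)$, so there exists $\mathbf{p}\in\mathbb{Z}_k^m$ with $M\mathbf{p}=-\mathbf{c}$. Applying Fact 1 once more, $\mathbf{p}$ is a solving pattern for $\mathbf{c}$, hence $\mathbf{c}$ is solvable. Since $\mathbf{c}$ was arbitrary, $D$ is always solvable in $G$.

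There is essentially no obstacle here: the statement is a reformulation of Fact 1 at the level of all configurations simultaneously. The only minor technical point to mention is that the argument works uniformly for $k<\infty$ and for $k=\infty$, because in both cases $\mathbb{Z}_k$ is a commutative ring and $Col_k(M)$ is the $\mathbb{Z}_k$-submodule of $\mathbb{Z}_k^n$ spanned by the columns of $M$; closure under additive inverses is automatic in either setting, which is the only property beyond Fact 1 that the proof uses.
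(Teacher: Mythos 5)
Your proof is correct and follows exactly the route the paper intends: the paper states this fact as an immediate consequence of the algebraic formulation $\mathbf{c}_{in}+M\mathbf{p}=\mathbf{c}_{fin}$ and Fact 1, and your quantification over all $\mathbf{c}$ together with the observation that $Col_k(M)$ is closed under negation is precisely the omitted verification.
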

\begin{fact}
\label{fact3}
In the case $k<\infty$, for every solvable configuration $\mathbf{c}$, there exist exactly $s$ solving patterns where $s= |Ker_k(M)|$.
\end{fact}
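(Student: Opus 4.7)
The plan is to exploit the standard affine structure of solution sets to an inhomogeneous linear equation over the ring $\mathbb{Z}_k$. By hypothesis, $\mathbf{c}$ is solvable, so by Fact 2 (applied to the single configuration $\mathbf{c}$ as in the first fact) we may fix one particular solving pattern $\mathbf{p}_0 \in \mathbb{Z}_k^m$ satisfying $M\mathbf{p}_0 = -\mathbf{c}$. The goal is then to set up a bijection between the set of all solving patterns for $\mathbf{c}$ and $Ker_k(M)$, and conclude by counting.

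Next I would verify the two directions. If $\mathbf{p}$ is any solving pattern for $\mathbf{c}$, then $M\mathbf{p} = -\mathbf{c} = M\mathbf{p}_0$, and $\mathbb{Z}_k$-linearity of matrix multiplication gives $M(\mathbf{p} - \mathbf{p}_0) = 0$, i.e. $\mathbf{p} - \mathbf{p}_0 \in Ker_k(M)$. Conversely, for any $\mathbf{q} \in Ker_k(M)$, the pattern $\mathbf{p}_0 + \mathbf{q}$ satisfies $M(\mathbf{p}_0 + \mathbf{q}) = M\mathbf{p}_0 + M\mathbf{q} = -\mathbf{c} + \mathbf{0} = -\mathbf{c}$ and is therefore a solving pattern. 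Hence the translation map $\mathbf{q} \mapsto \mathbf{p}_0 + \mathbf{q}$ is a well-defined bijection from $Ker_k(M)$ onto the set of solving patterns for $\mathbf{c}$ (injectivity is immediate since translation in the free $\mathbb{Z}_k$-module $\mathbb{Z}_k^m$ is cancellative).

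Finally, because $k < \infty$, the ambient module $\mathbb{Z}_k^m$ is finite, so both $Ker_k(M)$ and the set of solving patterns are finite, and the bijection above forces them to have the same cardinality $s = |Ker_k(M)|$. There is no real obstacle here: the proof is essentially the textbook coset description of the solution set of $M\mathbf{p} = -\mathbf{c}$ over the commutative ring $\mathbb{Z}_k$, and the only subtle point to record is that the finiteness hypothesis $k < \infty$ is what makes the count $s$ meaningful (in the $k = \infty$ case the same bijection exists but $|Ker_k(M)|$ may be infinite).
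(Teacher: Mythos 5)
Your proof is correct and is exactly the standard coset argument (solution set $=\mathbf{p}_0 + Ker_k(M)$, with translation giving a bijection) that the paper has in mind: the paper states this Fact without proof as an immediate consequence of the formulation $M\mathbf{p}=-\mathbf{c}$, and your write-up simply supplies those routine details.
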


We also have the following proposition.
\begin{prop}
\label{propker}

In the case $k<\infty$, $D$ is always solvable in $G$ if and only if $|Ker_k(M)|=k^{m-n}$.

\end{prop}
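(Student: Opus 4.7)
The plan is to apply the first isomorphism theorem to the map $M\colon \mathbb{Z}_k^m\to \mathbb{Z}_k^n$ viewed as a homomorphism of additive abelian groups. First I would observe that $Col_k(M)$ is, by definition, the image of this map while $Ker_k(M)$ is its kernel, so one obtains the short exact sequence
\[
0\longrightarrow Ker_k(M)\longrightarrow \mathbb{Z}_k^m\longrightarrow Col_k(M)\longrightarrow 0
\]
of finite abelian groups. Counting orders gives the identity
\[
|Col_k(M)|\cdot |Ker_k(M)|=|\mathbb{Z}_k^m|=k^m.
\]

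For the two implications I would then invoke Fact \ref{fact2}. If $D$ is always solvable in $G$, then $Col_k(M)=\mathbb{Z}_k^n$, so $|Col_k(M)|=k^n$, and the counting identity forces $|Ker_k(M)|=k^{m-n}$. Conversely, if $|Ker_k(M)|=k^{m-n}$, then $|Col_k(M)|=k^n=|\mathbb{Z}_k^n|$; since $Col_k(M)$ is a subgroup of the finite group $\mathbb{Z}_k^n$ having matching cardinality, it must equal $\mathbb{Z}_k^n$, and Fact \ref{fact2} then gives always solvability.

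There is no substantive obstacle here; the proposition is essentially the rank--nullity counting identity for finite abelian groups combined with Fact \ref{fact2}. The only minor point worth verifying is that $m\geq n$ so that $k^{m-n}$ is a positive integer, and this is automatic for any knot diagram, since $m=n+2$ by the Jordan curve/Euler count recorded in Section \ref{sec:prem}.
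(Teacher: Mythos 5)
Your argument is correct and is essentially the paper's own proof: both rest on the counting identity $|Col_k(M)|\cdot|Ker_k(M)|=k^m$ obtained from the fundamental (first) isomorphism theorem, combined with Fact \ref{fact2}. You simply spell out the two directions (including the observation that a subgroup of $\mathbb{Z}_k^n$ of order $k^n$ must be all of $\mathbb{Z}_k^n$) that the paper leaves implicit.
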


\begin{proof}
Since the matrix multiplication is a homomorphism of modules, by the fundamental theorem of homomorphisms we have\\
 $$|Col_k(M)||Ker_k(M)|=|\mathbb{Z}_k^m|=k^m.$$
Then the result follows by Fact \ref{fact2}.
\end{proof}

\begin{definition}\normalfont
Let $A$ be a matrix over $\mathbb{Z}_k$, where $k\leq \infty$. A pattern is called a \emph{null pattern} of $A$ if it belongs to $Ker_k(A)$.
\end{definition}

We have the following proposition.

\begin{prop}
\label{propmn}
Let $D$ be a link diagram with $n$ vertices and $m$ regions on which we play a version of the $k$-color region select game $G$ where $k< \infty $. Let $M$ be the corresponding game matrix. Fix $i \geq 0$ regions of $D$. Let $j$ be the number of null patterns of $M$ where these regions are not pushed. Then, there are $k^{m-i}/ j$ initial color configurations that can be solved without pushing these regions.

If there are $m-n$ regions where the only null pattern of $M$ these regions are not pushed is the trivial pattern $\mathbf{0}$, then, $D$ is always solvable in $G$. Moreover, any initial color configuration can be solved uniquely without pushing these regions.

\end{prop}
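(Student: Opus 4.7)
The plan is to reduce the statement to an application of the fundamental theorem of homomorphisms applied to a restricted version of the game matrix, in the same spirit as Proposition \ref{propker}. Let $S$ denote the set of indices of the $i$ fixed regions, and let $M'$ be the $n \times (m-i)$ submatrix of $M$ obtained by deleting the columns indexed by $S$. The push patterns $\mathbf{p} \in \mathbb{Z}_k^m$ that do not push any region in $S$ are precisely those whose entries at indices in $S$ are zero; forgetting those zero coordinates identifies this set with $\mathbb{Z}_k^{m-i}$, and under this identification the map $\mathbf{p} \mapsto M\mathbf{p}$ restricts to $\mathbf{p}' \mapsto M' \mathbf{p}'$.

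Next I would make two observations. First, by Fact \ref{fact2} applied in this restricted setting, an initial color configuration $\mathbf{c}$ is solvable without pushing the fixed regions if and only if $-\mathbf{c} \in Col_k(M')$, so the number of such configurations equals $|Col_k(M')|$. Second, a null pattern of $M$ in which the regions in $S$ are not pushed corresponds under the above identification to an element of $Ker_k(M')$, and conversely; hence $j = |Ker_k(M')|$. Applying the fundamental theorem of homomorphisms to the $\mathbb{Z}_k$-module homomorphism $\mathbb{Z}_k^{m-i} \to \mathbb{Z}_k^n$, $\mathbf{p}' \mapsto M' \mathbf{p}'$, gives
\begin{equation*}
|Col_k(M')| \cdot |Ker_k(M')| = |\mathbb{Z}_k^{m-i}| = k^{m-i},
\end{equation*}
which yields $|Col_k(M')| = k^{m-i}/j$ as required.

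For the second assertion, I would specialize to $i = m-n$ and $j = 1$: the formula then gives $|Col_k(M')| = k^n = |\mathbb{Z}_k^n|$, so every initial color configuration lies in $Col_k(M')$ and is therefore solvable (in particular without pushing the specified $m-n$ regions), which by Fact \ref{fact2} shows that $D$ is always solvable in $G$. Uniqueness of the solving pattern that avoids these regions follows at once from $Ker_k(M') = \{\mathbf{0}\}$, since two such solutions would differ by an element of this kernel. I do not anticipate a serious obstacle: all real content is the same counting argument used in Proposition \ref{propker}, and the only point requiring care is the bijection between null patterns of $M$ avoiding $S$ and elements of $Ker_k(M')$, which is immediate from the column-deletion construction.
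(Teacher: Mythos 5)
Your proposal is correct and follows essentially the same route as the paper's own proof: delete the columns of $M$ corresponding to the fixed regions, identify null patterns avoiding those regions with $Ker_k$ of the reduced matrix, and apply the fundamental theorem of homomorphisms to count $|Col_k|$ of the reduced matrix. The specialization $i=m-n$, $j=1$ for the second assertion is also exactly the paper's argument.
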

\begin{proof}

 Take an enumeration of the regions of $D$ such that the regions we fix are $r_{m-i+1},..., r_m$. For a vector $\mathbf{p}=(p_1,...,p_{m-i})^t\in\mathbb{Z}^{m-i}_k$, define the zero extension vector $\mathbf{p_e}=(p_1,...,p_{m-i},0,...,0)^t\in\mathbb{Z}^m_k$. Let $\widetilde{M}$ be the $n\times (m-i)$ matrix obtained from $M$ by deleting the last $i$ columns. Then, $\widetilde{M}\mathbf{p}=M\mathbf{p_e}$. Therefore $\mathbf{p}\in Ker_k(\widetilde{M})$ if and only if $\mathbf{p_e}\in Ker_k(M)$.
Hence, $j=|Ker_k(\widetilde{M})|$. Moreover, if an initial color configuration can be solved without pushing the regions $r_{m-i+1},..., r_m$, it must belong to $Col_k(\widetilde{M})$. On the other hand, $|Col_k(\widetilde{M})|= k^{m-i} / |Ker_k(\widetilde{M})|$ by the fundamental theorem of homomorphisms. Hence, there are $k^{m-i}/ j$ number of initial color configurations that can be solved without pushing these regions.

 If there are $m-n$ regions where the only null pattern these regions are not pushed is the trivial pattern $\mathbf{0}$, then $i=m-n$ and $j=1$. Hence,  $Ker_k(\widetilde{M})=\{\mathbf{0}\}$, and $ |Col_k(\widetilde{M})|= k^n $. Since $k^n$ is the number of all possible initial color configurations, this implies that any initial color configuration can be solved uniquely without pushing these regions. In particular, $D$ is always solvable.

\end{proof}

\section{Knot Diagrams are always solvable}\label{sec:solvable}

In this section, we show that knot diagrams are always solvable with respect to any version of the $k$-color region select game for any $k \leq \infty$.

\begin{definition}\normalfont
For a fixed $k\leq \infty$,  a vertex $v$ is said to be \emph{balanced} with respect to a push pattern $\mathbf{p}$ if the sum of the pushes of regions incident to $v$ is zero modulo $k$ in $\mathbf{p}$.
\end{definition}

\begin{lem}
\label{lem:bal}
Let $M$ be a game matrix of a link diagram $D$ over $\mathbb{Z}_k$, where $k\leq\infty$, and $\boldsymbol{\ell}$ be a null pattern of $M$. Then, any irreducible vertex of $D$ is balanced with respect to $\boldsymbol{\ell}$.
\end{lem}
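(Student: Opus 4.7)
The plan is to read off the content of the equation $M\boldsymbol{\ell}=\mathbf{0}$ along the row corresponding to an irreducible vertex and then cancel the common increment number.

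First I would fix an irreducible vertex $v_i$ of $D$. Because $v_i$ is irreducible (and $4$-valent, as every vertex of a link diagram is), the four local corners at $v_i$ lie in four pairwise distinct regions of $D$; call them $r_{j_1},r_{j_2},r_{j_3},r_{j_4}$. By the modified rules introduced in Section \ref{sec:game}, there is a single increment number $a\in\mathbb{Z}_k$ associated to $v_i$ (with $a=1$ in the case $k=\infty$), and $a$ is not a zero divisor. Hence, by the definition of the game matrix,
\[
M_{i,j}=\begin{cases} a & \text{if } j\in\{j_1,j_2,j_3,j_4\},\\ 0 & \text{otherwise.}\end{cases}
\]

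Next I would extract the $i$-th coordinate of the identity $M\boldsymbol{\ell}=\mathbf{0}$, which reads
\[
a\bigl(\ell_{j_1}+\ell_{j_2}+\ell_{j_3}+\ell_{j_4}\bigr)=0\quad\text{in }\mathbb{Z}_k.
\]
Since $a$ is not a zero divisor in $\mathbb{Z}_k$ (either $k<\infty$ with $a$ a non-zero-divisor chosen by the rules, or $k=\infty$ with $a=1$ in the integral domain $\mathbb{Z}$), cancellation yields
\[
\ell_{j_1}+\ell_{j_2}+\ell_{j_3}+\ell_{j_4}=0\quad\text{in }\mathbb{Z}_k,
\]
which is precisely the statement that $v_i$ is balanced with respect to $\boldsymbol{\ell}$.

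The only point requiring a moment of care is the cancellation step, since being a non-zero-divisor is formally weaker than being a unit; but the defining property of non-zero-divisors is exactly that $ax=0\Rightarrow x=0$, which is what we need. The one geometric input is that an irreducible $4$-valent vertex has four \emph{distinct} incident regions, so that the sum in the row of $M$ involves four separate coordinates of $\boldsymbol{\ell}$ rather than repeated ones; this is what fails at a reducible vertex and is the reason the lemma is restricted to irreducible vertices.
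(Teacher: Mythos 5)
Your proof is correct and follows essentially the same route as the paper's: read off the row of $M\boldsymbol{\ell}=\mathbf{0}$ at an irreducible vertex, factor out the common increment number $a$, and cancel it using that $a$ is a non-zero-divisor (or $a=1$ when $k=\infty$). The extra remark about distinctness of the four incident regions is a reasonable clarification but does not change the argument.
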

\begin{proof}
Let $v$ be an irreducible vertex of $D$ and let $a$ be the increment number of $v$ with respect to all its incident regions in the version of the $k$-color region select game corresponding to $M$. Let $r_1,...,r_4$ be the regions incident to $v$. Then, $(M\boldsymbol{\ell})(v)= a(\boldsymbol{\ell}(r_1)+\boldsymbol{\ell}(r_2)+\boldsymbol{\ell}(r_3)+\boldsymbol{\ell}(r_4))$. On the other hand, since $\boldsymbol{\ell}$ is a null pattern of $M$, $M\boldsymbol{\ell}=0$. Hence $a(\boldsymbol{\ell}(r_1)+\boldsymbol{\ell}(r_2)+\boldsymbol{\ell}(r_3)+\boldsymbol{\ell}(r_4))=0$. By the rules of the game $a=1$ if $k=\infty$ and $a$ is not a zero divisor of $\mathbb{Z}_k$ for $k<\infty$. Hence, $\boldsymbol{\ell}(r_1)+\boldsymbol{\ell}(r_2)+\boldsymbol{\ell}(r_3)+\boldsymbol{\ell}(r_4)=0$, which means $v$ is balanced.
\end{proof}

\begin{definition}\normalfont
The \emph{push number} $\sigma_{\bf p}(e)$ \emph{of an edge} $e$ \emph{with respect to a push pattern} ${\bf p}$ is the sum of the pushes of the regions incident to $e$ in ${\bf p}$ modulo $k$. More precisely, if $e$ is incident to the regions $r_1$ and $r_2$, then $\sigma_{\bf p}(e)= {\bf p}(r_1)+ {\bf p}(r_2)$ $\mod$ $k$. \end{definition}
We have the following lemma.

\begin{lem}
\label{lempush}
Let $D$ be an oriented reduced knot diagram and $\boldsymbol{\ell}$ be a null pattern of a game matrix $M$ of $D$ over $\mathbb{Z}_k$, where $k\leq \infty$. Then, there exists $s\in \mathbb{Z}_k$  such that $\sigma_{\boldsymbol{\ell}}(e)=s$ or $-s$ for every edge $e$ of $D$. Moreover, for any pair of adjacent edges $e_1$ and $e_2$ which are not incident to the same region, $\sigma_{\boldsymbol{\ell}}(e_1)=s$ if and only if  $\sigma_{\boldsymbol{\ell}}(e_2)=-s$.
\end{lem}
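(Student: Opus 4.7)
The plan is to combine the local balance at irreducible vertices (Lemma \ref{lem:bal}) with the global structure of a knot diagram as a single closed curve. Since $D$ is reduced, every vertex is irreducible, so Lemma \ref{lem:bal} applies at every vertex.

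First I would establish the local picture at a single crossing. Let $v$ be a vertex, and label the four regions around it cyclically as $r_1, r_2, r_3, r_4$, so that the four edges incident to $v$ are $e_{12}, e_{23}, e_{34}, e_{41}$, where $e_{ij}$ lies between $r_i$ and $r_j$. By Lemma \ref{lem:bal},
\[
\boldsymbol{\ell}(r_1)+\boldsymbol{\ell}(r_2)+\boldsymbol{\ell}(r_3)+\boldsymbol{\ell}(r_4)=0.
\]
Grouping this as $\bigl(\boldsymbol{\ell}(r_1)+\boldsymbol{\ell}(r_2)\bigr)+\bigl(\boldsymbol{\ell}(r_3)+\boldsymbol{\ell}(r_4)\bigr)$ yields $\sigma_{\boldsymbol{\ell}}(e_{12})+\sigma_{\boldsymbol{\ell}}(e_{34})=0$, and similarly $\sigma_{\boldsymbol{\ell}}(e_{23})+\sigma_{\boldsymbol{\ell}}(e_{41})=0$. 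Thus the two edges that lie on the same strand through $v$ (the ``opposite'' pair, which is exactly the pair of edges at $v$ not sharing any incident region) carry opposite push numbers. This immediately proves the ``moreover'' clause.

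For the global claim, I would use the fact that $D$ is a knot diagram, hence a single closed curve. Traverse $D$ starting at an arbitrary edge $e_0$, and set $s:=\sigma_{\boldsymbol{\ell}}(e_0)$. At every crossing we pass through, the strand continues to the opposite edge at that vertex, and by the local calculation above the push number flips sign. By induction along the traversal, every edge $e$ of $D$ satisfies $\sigma_{\boldsymbol{\ell}}(e)\in\{s,-s\}$, which is the first assertion. (The traversal is consistent because the curve passes through each crossing exactly twice, so the total number of sign flips after one full loop is even, and we return to the value $s$ on $e_0$.)

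The only step requiring mild care is the identification, at a 4-valent vertex, of the pairs of incident edges that do not share a region with precisely the two ``opposite'' pairs determined by the two strands; this is a direct consequence of the cyclic arrangement of the four regions around $v$, so no real obstacle arises. The rest is a bookkeeping argument combining Lemma \ref{lem:bal} with the Eulerian traversal of the underlying 4-regular planar graph of $D$.
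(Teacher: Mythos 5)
Your proof is correct and takes essentially the same approach as the paper: the local sign flip at each crossing is derived from the balance condition of Lemma \ref{lem:bal} (valid at every vertex since $D$ is reduced), and the global statement follows by traversing the single closed curve underlying $D$. The parenthetical consistency check at the end is harmless but not needed, since the conclusion only asserts membership in $\{s,-s\}$.
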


\begin{proof}
Let $e_1$ and $e_2$ be two adjacent edges that meet at a vertex $v$ and are not incident to the same region. Let $r_1,...,r_4$ be the regions incident to $v$ such that $r_1$ and $r_2$ are incident to $e_1$, $r_3$ and $r_4$ are incident to $e_2$. Let $\sigma_{\boldsymbol{\ell}}(e_1)=s$, for some $s\in\mathbb{Z}_k $. This means $\boldsymbol{\ell}(r_1)+\boldsymbol{\ell}(r_2)=s$. On the other hand, since $D$ is a reduced knot diagram, $v$ is an irreducible vertex. Hence by Lemma \ref{lem:bal}, it is balanced with respect to $\boldsymbol{\ell}$, i.e; $\boldsymbol{\ell}(r_1)+\boldsymbol{\ell}(r_2)+\boldsymbol{\ell}(r_3)+\boldsymbol{\ell}(r_4)=0$. This implies $\sigma_{\boldsymbol{\ell}}(e_2)=\boldsymbol{\ell}(r_3)+\boldsymbol{\ell}(r_4)=-s$.

Let us start to travel along $D$ starting from a point on $e_1$ by following the orientation on $D$. Using the above argument inductively, we see that the push number of any edge with respect to $\boldsymbol{\ell}$ on our path cannot assume any value other than $s$ or $-s$. Since $D$ is a closed curve this means every edge of $D$ has a push number which is either $s$ or $-s$.
\end{proof}

\begin{lem}
\label{mainlemma}
Let $D$ be a knot diagram, $v$ be an irreducible vertex of $D$, and $\boldsymbol{\ell}$ be a null pattern of a game matrix $M$ of $D$ over $\mathbb{Z}_k$ where  $k\leq \infty$. Then, two non-adjacent regions incident to $v$ are pushed by the same number of times in $\boldsymbol{\ell}$.
\end{lem}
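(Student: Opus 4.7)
The plan is to translate the desired equality of region push numbers into an identity between edge push numbers, and then derive the latter from Lemma~\ref{lempush} together with an extension of its walking argument to handle possible reducible vertices.

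Label the four regions incident to $v$ cyclically as $r_1,r_2,r_3,r_4$, and let $e_i$ be the edge between $r_i$ and $r_{i+1}$ (indices mod $4$). The two non-adjacent pairs at $v$ are $\{r_1,r_3\}$ and $\{r_2,r_4\}$. Since $v$ is irreducible, Lemma~\ref{lem:bal} gives $\sum_{i=1}^{4}\boldsymbol{\ell}(r_i)=0$, and combining this with $\sigma_{\boldsymbol{\ell}}(e_i)=\boldsymbol{\ell}(r_i)+\boldsymbol{\ell}(r_{i+1})$ yields $\sigma_{\boldsymbol{\ell}}(e_3)=-\sigma_{\boldsymbol{\ell}}(e_1)$ and $\sigma_{\boldsymbol{\ell}}(e_4)=-\sigma_{\boldsymbol{\ell}}(e_2)$. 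A short direct computation then gives
\[
\boldsymbol{\ell}(r_1)-\boldsymbol{\ell}(r_3)=\sigma_{\boldsymbol{\ell}}(e_1)-\sigma_{\boldsymbol{\ell}}(e_2), \qquad \boldsymbol{\ell}(r_2)-\boldsymbol{\ell}(r_4)=\sigma_{\boldsymbol{\ell}}(e_1)+\sigma_{\boldsymbol{\ell}}(e_2).
\]
Hence it is enough to show $\sigma_{\boldsymbol{\ell}}(e_1)=\pm\sigma_{\boldsymbol{\ell}}(e_2)$: the first sign yields $\boldsymbol{\ell}(r_1)=\boldsymbol{\ell}(r_3)$, the second yields $\boldsymbol{\ell}(r_2)=\boldsymbol{\ell}(r_4)$, and in either case a non-adjacent pair is equalized.

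When $D$ is reduced this is immediate from Lemma~\ref{lempush}: all edge push numbers lie in $\{s,-s\}$ for a common $s\in\mathbb{Z}_k$, so in particular $\sigma_{\boldsymbol{\ell}}(e_1),\sigma_{\boldsymbol{\ell}}(e_2)\in\{s,-s\}$. When $D$ has reducible vertices, I would extend the walking argument by tracing $\sigma_{\boldsymbol{\ell}}$ along the knot curve from $e_3$ (just after $v$ on one strand) to $e_2$ (just before re-entering $v$ on the other strand). At every irreducible vertex met in between, the balanced condition forces a sign flip of $\sigma_{\boldsymbol{\ell}}$ exactly as in the proof of Lemma~\ref{lempush}. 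At a reducible vertex $v''$ with twice-incident region $r_B$ and singly-incident regions $r_A,r_C$, the two non-self-loop edges at $v''$ are both bounded by $r_B$ and $r_C$, so they carry the identical push number $\boldsymbol{\ell}(r_B)+\boldsymbol{\ell}(r_C)$ regardless of the increment coefficient chosen at $v''$; hence the contiguous piece of the walk that enters $v''$ on one main edge, detours along the self-loop, and exits on the other main edge leaves $\sigma_{\boldsymbol{\ell}}$ unchanged at its two end-edges. Since $e_3$ and $e_2$ are themselves main edges (both are incident to the irreducible vertex $v$, which carries no self-loop), the accumulated sign change from $e_3$ to $e_2$ equals $(-1)^{N}$, where $N$ is the number of irreducible-vertex visits encountered, and therefore $\sigma_{\boldsymbol{\ell}}(e_2)=(-1)^{N}\sigma_{\boldsymbol{\ell}}(e_3)=-(-1)^{N}\sigma_{\boldsymbol{\ell}}(e_1)=\pm\sigma_{\boldsymbol{\ell}}(e_1)$.

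The main obstacle will be a careful combinatorial bookkeeping of the walk when reducible crossings are present: one must verify that each self-loop detour is a single contiguous block of the knot walk (true because the self-loop edge has both endpoints at the same reducible vertex and contains no intervening crossings), and that the endpoints of the sub-walk from $e_3$ to $e_2$ lie strictly outside every such block (which holds because these endpoints sit at the distinct vertex $v$). With this accounting in place the sign-flip tally reduces to the reduced case of Lemma~\ref{lempush} and the proof is complete.
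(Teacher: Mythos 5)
Your reduction of the lemma to the single identity $\sigma_{\boldsymbol{\ell}}(e_1)=\pm\sigma_{\boldsymbol{\ell}}(e_2)$ is correct, and your treatment of the reduced case via Lemma \ref{lempush} is sound and essentially the paper's argument. The gap is in the non-reduced case: you model every reducible crossing as a curl, i.e.\ a vertex carrying a self-loop edge ``with no intervening crossings'' together with two main edges bounded by a common pair of regions. But by the paper's definition a reducible crossing is any crossing $v''$ admitting a circle that meets $D$ only at $v''$; the two lobes this circle separates may each contain arbitrarily many crossings (see Figures \ref{fig:reducible} and \ref{fig:six}, and the whole apparatus of connectedly reducible parts in Section \ref{sec:assertions}). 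For such a $v''$ there is no self-loop: the four edges split into two pairs, one pair entering each lobe, and it is the two edges of a \emph{common lobe} that share their incident regions ($r_0,r_1$ for one lobe and $r_0,r_2$ for the other, $r_0$ being the twice-touching region). Since a strand passage through a crossing joins opposite edges, the walk arriving at $v''$ from the lobe containing $v$ exits into the opposite lobe on an edge of push number $\boldsymbol{\ell}(r_0)+\boldsymbol{\ell}(r_2)$, which bears no relation, even up to sign, to the push number $\boldsymbol{\ell}(r_0)+\boldsymbol{\ell}(r_1)$ of the edge it arrived on. Your rule ``the sign flips exactly at irreducible vertices'' therefore does not propagate across a general reducible crossing, and your stated justification of contiguity (``the self-loop edge\ldots contains no intervening crossings'') only covers the curl case.

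The argument can be repaired, but only by taking the detour block to be the \emph{entire excursion into the opposite lobe}: this excursion is contiguous (the walk can only return to the lobe containing $v$ through $v''$), its two bounding edges lie in the near lobe and carry equal push numbers, the blocks arising from different reducible vertices are properly nested, and each block contains an even number of irreducible-vertex visits (every crossing of the far lobe is traversed exactly twice during the excursion), so the parity count $(-1)^N$ survives. None of this bookkeeping is in your write-up. The paper sidesteps it entirely: it applies the oriented smoothing to all reducible vertices simultaneously, notes that a game matrix of the resulting link is obtained from $M$ by deleting the rows of the reducible vertices so that $\boldsymbol{\ell}$ restricts to a null pattern of the reduced component $D''$ containing $v$, and then invokes the reduced case on $D''$. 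I would recommend either adopting that route or stating and proving the block decomposition for general lobes rather than for crossing-free self-loops only.
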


\begin{proof}

First assume  that $D$ is a reduced knot diagram. Let $e_1,...,e_4$ and $r_1,...,r_4$ be the edges and regions incident to $v$, respectively, which are oriented as in Figure \ref{fig:edges}. Without loss of generality we can assume that $\sigma_{\boldsymbol{\ell}}(e_1)=\sigma_{\boldsymbol{\ell}}(e_2)=s$, and $\sigma_{\boldsymbol{\ell}}(e_3)=\sigma_{\boldsymbol{\ell}}(e_4)=-s$ for some $s\in\mathbb{Z}_k$ by Lemma \ref{lempush}. Then, $\boldsymbol{\ell}(r_1)+\boldsymbol{\ell}(r_4)=\sigma_{\boldsymbol{\ell}}(e_1)=\sigma_{\boldsymbol{\ell}}(e_2)=\boldsymbol{\ell}(r_1)+\boldsymbol{\ell}(r_2)$. Hence, $\boldsymbol{\ell}(r_4)=\boldsymbol{\ell}(r_2)$.

\begin{figure}[H]
\centering
\includegraphics[scale=.25]{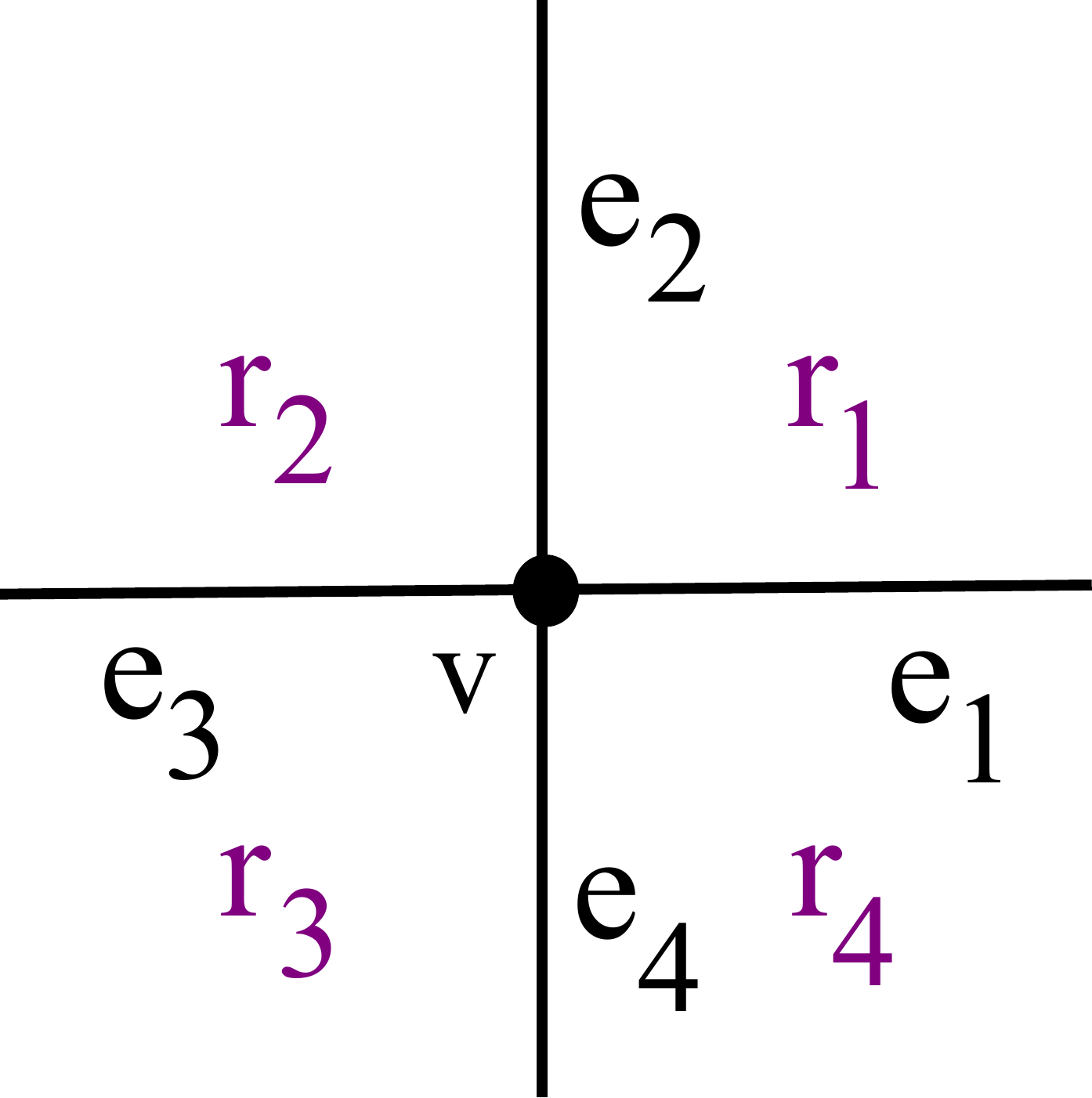}
\caption{Edges and regions that are incident to a vertex}
\label{fig:edges}
\end{figure}

Let now $D$ be any knot diagram which contains reducible crossings. We first endow it with an orientation and construct the link diagram $D'$ obtained from $D$ by applying oriented smoothing operation simultaneously to every reducible vertex of $D$. We illustrate an example of this procedure in Figure \ref{fig:reducible}. Note that the oriented smoothing operation when applied to a reducible vertex preserves the vertex-region  structure of irreducible crossings of the diagram. This means that a game matrix $M'$ of $D'$  can be constructed from $M$ by deleting the rows corresponding to the reducible vertices. Therefore, regions of $D$ and $D'$ can be identified and any null pattern of $M$ is also a null pattern of $M'$, in particular $\boldsymbol{\ell}$. Moreover $D'$ is the union of disjoint components. Let $D''$ be the component of $D'$ which contains $v$. We can construct a game matrix $M''$ of $D''$ by deleting the columns of $M'$ corresponding to the regions whose boundary does not intersect $D''$. Then the restriction $\boldsymbol{\ell}_{res}$ of $\boldsymbol{\ell}$ to the regions of $D''$ is a null pattern of $M''$. Since $D''$ is a reduced knot diagram, by the first part of the proof, two non-adjacent regions incident to $v$ are pushed by the same number of times in $\boldsymbol{\ell}_{res}$, hence in $\boldsymbol{\ell}$.

\begin{figure}[H]
\centering
\includegraphics[scale=.25]{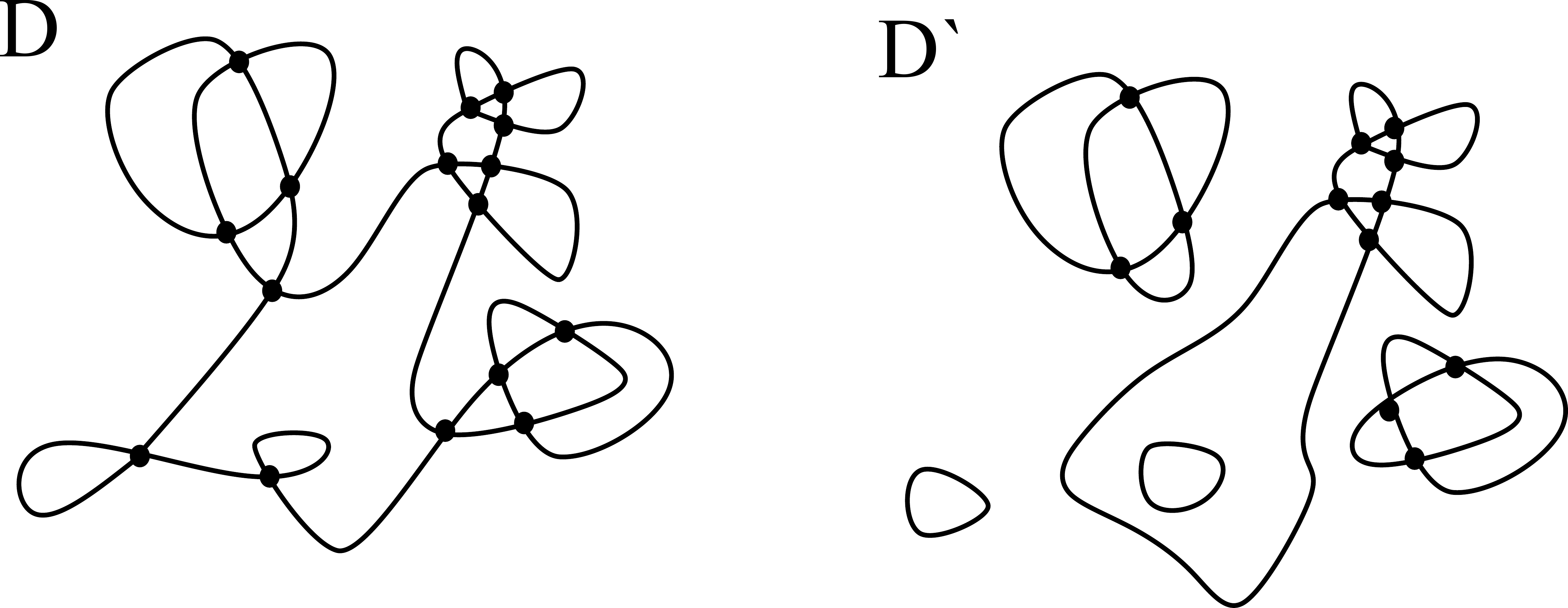}
\caption{A knot diagram containing reducible crossings}
\label{fig:reducible}
\end{figure}

\end{proof}

\begin{prop}
\label{prop0}
Let $D$ be a knot diagram, $M$ be a game matrix of $D$ over $\mathbb{Z}_k$, where $k\leq \infty$. Then, the only null pattern of $M$ where two adjacent regions of $D$ are not pushed is the trivial pattern $\mathbf{0}$.
\end{prop}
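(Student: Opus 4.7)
The plan is to deduce $\boldsymbol{\ell} = \mathbf{0}$ by propagating the zero values at the adjacent regions $r$ and $r'$ throughout the dual graph of $D$. I would split the argument into two cases: when $D$ is a reduced knot diagram, and when $D$ contains reducible crossings.

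First, suppose $D$ is a reduced knot diagram. Let $e$ denote the edge common to $r$ and $r'$. Then the push number satisfies $\sigma_{\boldsymbol{\ell}}(e) = \boldsymbol{\ell}(r) + \boldsymbol{\ell}(r') = 0$. By Lemma \ref{lempush} there exists $s \in \mathbb{Z}_k$ such that every edge of $D$ has push number $s$ or $-s$; since $\sigma_{\boldsymbol{\ell}}(e) = 0$, this forces $s = 0$, and consequently every edge of $D$ has push number $0$. Equivalently, $\boldsymbol{\ell}(r_i) + \boldsymbol{\ell}(r_j) = 0$ for every pair of adjacent regions $r_i, r_j$. Starting from $\boldsymbol{\ell}(r) = 0$ and iterating along the dual graph of $D$, which is connected, this gives $\boldsymbol{\ell}(r'') = 0$ for every region $r''$, so $\boldsymbol{\ell} = \mathbf{0}$.

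Next, when $D$ has reducible crossings, I would reduce to the previous case via the oriented smoothing construction employed in the proof of Lemma \ref{mainlemma}. Orient $D$ and smooth at every reducible vertex to obtain a link diagram $D'$ that is a disjoint union of reduced knot components, with regions of $D$ in bijective correspondence with regions of $D'$, and with $\boldsymbol{\ell}$ persisting as a null pattern of a game matrix of $D'$. Since $r$ and $r'$ remain adjacent in some reduced component $D''$ of $D'$ (via the persisting common edge $e$), applying the reduced case to $D''$ yields $\boldsymbol{\ell} = 0$ on all regions of $D''$. The vanishing is then extended to all remaining regions of $D$ by successive application of the null pattern constraint $a_0 \boldsymbol{\ell}(r_0) + a_1 \boldsymbol{\ell}(r_1) + a_2 \boldsymbol{\ell}(r_2) = 0$ at each reducible vertex of $D$: because $a_1$ and $a_2$ are not zero divisors, whenever two of the three values $\boldsymbol{\ell}(r_0), \boldsymbol{\ell}(r_1), \boldsymbol{\ell}(r_2)$ are zero with at least one being a one-sided incident region ($r_1$ or $r_2$), the remaining value must also vanish.

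The main obstacle is the careful bookkeeping for the identification of regions under oriented smoothing and ensuring that the propagation reaches every region of $D$ through a chain of irreducible-vertex and reducible-vertex constraints; this essentially parallels the framework established in the proof of Lemma \ref{mainlemma}, which I would adapt to the present setting.
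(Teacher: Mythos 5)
Your strategy is the same in spirit as the paper's --- propagate the two zero values until every region is forced to be unpushed --- but the route differs at both stages, and one step as you state it is false. For the reduced case the paper does not pass through Lemma \ref{lempush} and the dual graph at all: it travels once along the curve $D$, using Lemma \ref{mainlemma} together with the balance condition of Lemma \ref{lem:bal} to show that if the two regions flanking the current edge are unpushed then so are the remaining two regions at the next crossing; since the curve is closed this visits every edge, hence every region. Your alternative (the edge between $r$ and $r'$ has push number $0$, so $s=0$ in Lemma \ref{lempush}, so every adjacent pair of regions has values summing to zero, and connectivity of the dual graph finishes it) is correct and arguably cleaner for the reduced case. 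The trade-off appears in the general case: the paper's single traversal of the closed curve handles reducible and irreducible vertices uniformly and needs no separate argument that the propagation is exhaustive, whereas your decomposition into the components of $D'$ obliges you to verify that zeros spread from $D''$ to every other component and every region --- precisely the ``bookkeeping'' you defer, which is the actual content of the non-reduced case.

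Within that deferred step there is one genuinely wrong assertion. At a reducible vertex the relation is $a_0\boldsymbol{\ell}(r_0)+a_1\boldsymbol{\ell}(r_1)+a_2\boldsymbol{\ell}(r_2)=0$ with only $a_1,a_2$ guaranteed not to be zero divisors; your rule ``two of the three values are zero with at least one being a one-sided region'' would let you conclude $\boldsymbol{\ell}(r_0)=0$ from $\boldsymbol{\ell}(r_1)=\boldsymbol{\ell}(r_2)=0$, which fails when $a_0$ is zero or a zero divisor. The correct rule is that the \emph{unknown} region must be one-sided. The argument can still be completed, because the two-sided region $r_0$ has boundary edges on both local strands at the crossing and therefore lies among the regions already known to vanish as soon as either adjacent component has been handled; so every deduction that actually arises is of the valid form, and each such deduction hands the next component two adjacent unpushed regions ($r_0$ together with the newly forced one-sided region), so that connectivity of $D$ carries the propagation everywhere. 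But this observation must be made explicitly --- as written, your rule is not true and your completeness claim is unproved.
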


\begin{proof}

Let $\boldsymbol{\boldsymbol{\ell}}$ be a null pattern where two adjacent regions $r_1$ and $r_2$ are not pushed. Let $v$ be a vertex incident to both $r_1$ and $r_2$. First assume that $v$ is an irreducible vertex. Let $r_3$, $r_4$ be the other two regions incident to $v$. Since $r_1$ and $r_2$ are not pushed in $\boldsymbol{\ell}$, one of the regions $r_3$ or $r_4$ should not be pushed either by Lemma \ref{mainlemma}. Assume without loss of generality that $r_3$ is not pushed. On the other hand $v$ must be balanced with respect to $\boldsymbol{\ell}$ by Lemma \ref{lem:bal}. Since $r_1,r_2,r_3$ are not pushed, this implies $r_4$ is not pushed either.

Now assume that $v$ is a reducible vertex. Then, there is only one more region, call it $r$, which is incident to $v$ other than $r_1$ and $r_2$. Note that the regions which touch $v$ from one side cannot be adjacent to each other, so either $r_1$ or $r_2$ is the region which touches $v$ from both sides. Hence, $r$ touches $v$ from one side. Therefore the increment number $a$ of $v$ with respect to $r$ is not a zero divisor. Since $r_1$ and $r_2$ are not pushed $(M\boldsymbol{\ell})(v)=a\boldsymbol{\ell}(r)$. On the other hand, since $\boldsymbol{\ell}$ is a null pattern $M\boldsymbol{\ell}=0$. Hence $a\boldsymbol{\ell}(r)=0$. Since $a$ is not a zero divisor we conclude that  $\boldsymbol{\ell}(r)=0$, i.e, $r$ is not pushed.

Using induction on the number of vertices, this argument shows us, by traveling the underlying curve of $D$, starting from the edge incident to $r_1$ and $r_2$ we can never reach a pushed region. Since $D$ is a closed curve, this means that there is no pushed region in $D$, hence $\boldsymbol{\boldsymbol{\ell}}$ is the trivial null pattern  $\mathbf{0}$.
\end{proof}

Now we are ready to state our main result.

\begin{thm}
\label{propadj}
Every knot diagram is always solvable in any version of the $k$-color region select game for all $k\leq\infty$. Moreover, any initial color configuration can be solved uniquely without pushing any two adjacent regions.
\end{thm}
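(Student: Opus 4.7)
The plan is to combine Proposition \ref{prop0} with Proposition \ref{propmn}, invoking the fact (recorded just before the formal definition of a region) that a knot diagram with $n$ vertices always has exactly $m=n+2$ regions. Thus fixing any pair of adjacent regions $r_1,r_2$ amounts to fixing exactly $i=2=m-n$ regions, which is precisely the boundary case of the final sentence of Proposition \ref{propmn}. Once this numerical coincidence is noted, the theorem is essentially the concatenation of the previous two statements.

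For $k<\infty$ the argument is then a direct chain of references. Proposition \ref{prop0} says that the only null pattern of the game matrix $M$ in which both $r_1$ and $r_2$ are unpushed is $\mathbf{0}$, i.e., $j=1$. Substituting $i=m-n$ and $j=1$ into Proposition \ref{propmn} immediately yields that $D$ is always solvable in $G$ and that the solving pattern omitting pushes on $r_1$ and $r_2$ is unique. Since $r_1,r_2$ were arbitrary adjacent regions, the clause ``any two adjacent regions'' is automatic.

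For $k=\infty$ the hypothesis $k<\infty$ in Proposition \ref{propmn} is essential, so a separate argument is needed; this is the main obstacle. Let $\widetilde{M}$ be the $n\times n$ submatrix of $M$ obtained by deleting the two columns corresponding to $r_1$ and $r_2$. Proposition \ref{prop0} applied over $\mathbb{Z}$ gives $Ker_{\mathbb{Z}}(\widetilde{M})=\{\mathbf{0}\}$, so $\widetilde{M}$ is injective as a map $\mathbb{Z}^n\to\mathbb{Z}^n$ and in particular $\det(\widetilde{M})\neq 0$. The remaining point is surjectivity over $\mathbb{Z}$.

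The plan for surjectivity is to bootstrap from the finite-$k$ cases. Proposition \ref{prop0} also yields $Ker_k(\widetilde{M})=\{\mathbf{0}\}$ for every finite $k$, and since $\widetilde{M}$ is square this forces $\det(\widetilde{M})$ to be a unit in $\mathbb{Z}_k$ for every $k\geq 2$. A nonzero integer that is coprime to every integer must equal $\pm 1$, so $\widetilde{M}$ is unimodular, hence $\widetilde{M}:\mathbb{Z}^n\to\mathbb{Z}^n$ is a $\mathbb{Z}$-module isomorphism. This delivers always solvability together with the unique solving pattern that does not push $r_1$ or $r_2$ in the $k=\infty$ case as well, completing the proof.
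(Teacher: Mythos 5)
Your proposal is correct. For $k<\infty$ it coincides exactly with the paper's proof: note $m-n=2$, apply Proposition \ref{prop0} to get $j=1$, and invoke the final clause of Proposition \ref{propmn}. For $k=\infty$, however, you take a genuinely different route. The paper argues pointwise: given a target configuration $\mathbf{c}$, it uses the rank of $\mathbb{Z}^n$ to produce an integer relation $q_1\mathbf{c}_1+\dots+q_n\mathbf{c}_n+q\mathbf{c}=\mathbf{0}$ with $q\neq 0$, then reduces the relation modulo $q$ and applies Proposition \ref{prop0} over $\mathbb{Z}_q$ to conclude $q\mid q_i$ for all $i$, clearing the denominator. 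You instead prove once and for all that the square submatrix $\widetilde{M}$ is unimodular: trivial kernel over $\mathbb{Z}_k$ forces $\det(\widetilde{M})$ to be a unit in $\mathbb{Z}_k$ for every finite $k$ (over the finite ring $\mathbb{Z}_k$, an injective endomorphism of $\mathbb{Z}_k^n$ is bijective), and an integer coprime to every $k\geq 2$ is $\pm 1$. Your version is arguably cleaner and yields the $\mathbb{Z}$-module isomorphism $\widetilde{M}:\mathbb{Z}^n\to\mathbb{Z}^n$ directly, giving existence and uniqueness in one stroke; the paper's version avoids determinants entirely. Both arguments hinge on the same slightly-glossed observation, which you should make explicit as the paper does: the reduction of the $\infty$-game matrix modulo $k$ (respectively modulo $q$) is itself a valid game matrix of $D$ over $\mathbb{Z}_k$ — this holds because the $\infty$-game forces increment number $1$ exactly at the vertex--region pairs where the finite-$k$ rules demand a non-zero-divisor, and $1$ remains a unit in every $\mathbb{Z}_k$ — so that Proposition \ref{prop0} really does apply to the reduced matrix.
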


\begin{proof}
Since the difference between the number of regions and number of vertices of a knot diagram is $2$, in the case $k<\infty$, the result follows by Proposition \ref{propmn} and Proposition \ref{prop0}.

In the case $k=\infty$, Let $D$ be a knot diagram with $n$ vertices, $\{r_1,...,r_{n+2}\}$ be an enumeration of the regions of $D$  so that $r_{n+1}$ and $r_{n+2}$ are adjacent. Then take a game  matrix $M$ of $D$ over $\mathbb{Z}$. Let $\widetilde{M}$ be the $n\times n$ matrix obtained from $M$ by deleting its last two columns. Then, Proposition \ref{prop0} implies that $ Ker_\infty(\widetilde{M})=\{\mathbf{0}\}$ (See the proof of Proposition \ref{propmn} for a more detailed explanation). This is equivalent to say that the column vectors of $\widetilde{M}$ (equivalently first $n$ column vectors of $M$) are linearly independent in the $\mathbb{Z}$-module $\mathbb{Z}^n$. Let us denote these column vectors by $\mathbf{c}_1,...,\mathbf{c}_n$. Let $\mathbf{c}$ be an arbitrary vector in $\mathbb{Z}^n$ corresponding to an initial color configuration. It is an elementary fact that $\mathbb{Z}^n$ has rank $n$. Therefore, any set of vectors which has more than $n$ elements is linearly dependent in $\mathbb{Z}^n$. Hence, there are integers $q_1,...,q_n$, and $q$, some of which are nonzero, such that
\begin{equation}
\label{eqnlin}
q_1\mathbf{c}_1+...+q_n\mathbf{c}_n + q \mathbf{c}=\mathbf{0}.
\end{equation}

Note that $q$ cannot be zero, otherwise $\mathbf{c}_1,...,\mathbf{c}_n$ would be linearly dependent. Equation \eqref{eqnlin} is equivalent to the following matrix equation
\begin{equation}
\label{eqnmat}
 M  \begin{bmatrix}
           q_{1} \\
           \vdots \\
           q_{n}\\
           0\\
           0
         \end{bmatrix}= - q \mathbf{c}.
\end{equation}

Multiplying \eqref{eqnlin} by  $-1$ if necessary, we can assume that $q > 0$. Our aim is to show that $q_i$ is divisible by $q$ for $i=1,...,n$. Since this is trivially true if $q=1$, assume further that $q$ is greater than $1$. Then, we can consider the above equation in modulo $q$ and obtain

\begin{equation}
\label{eqnmod}
 \overline{M}  \begin{bmatrix}
           \overline{q_{1}} \\
           \vdots \\
           \overline{q_{n}}\\
           0\\
           0
         \end{bmatrix}= \mathbf{0},
\end{equation}
where $\overline{q_{i}}= q_i$ mod $q$ for $i=1,...,n$ and   $\overline{M}$ is the matrix whose entries are given by $(\overline{M})_{ij}= (M)_{ij}$ mod $q$.

It is easy to observe that $\overline{M}$ is a game matrix of $D$ over $\mathbb{Z}_q$. This observation, together with Proposition \ref{prop0}, immediately implies that $\overline{q_{i}}=0$ for  $i=1,...,n$. So all $q_i$'s are divisible by $q$. Then, there exist numbers  $p_1,..., p_n$ such that $q_i=q p_i$ for $i=1,...,n$, and by equation \eqref{eqnmat} we obtain

\begin{equation}
\label{eqnmat2}
 M  \begin{bmatrix}
           p_{1} \\
           \vdots \\
           p_{n}\\
           0\\
           0
         \end{bmatrix}= - \mathbf{c}.
\end{equation}
Since $M$ is an arbitrary game matrix over $\mathbb{Z}$, and $\mathbf{c}$ is an arbitrary initial color configuration, the above equation means that $D$ is always solvable in any version of the $\infty$-color region select game and any initial color configuration can be solved without pushing any two adjacent regions. Uniqueness follows from the fact that $ Ker_\infty(\widetilde{M})=\{\mathbf{0}\}$.

\end{proof}


\begin{thm}
\label{thmker}
Let $D$ be a knot diagram, $M$ be a game matrix of $D$ over $\mathbb{Z}_k$ where $k< \infty$. Then, $|Ker_k(M)|=k^2$ .
\end{thm}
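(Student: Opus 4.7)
The plan is to combine two earlier results: Theorem \ref{propadj} (always solvability of $D$) and Proposition \ref{propker} (the kernel-size characterization of always solvability). The only arithmetic input needed is that $m-n=2$ for any knot diagram, which was observed in the Preliminaries via Euler's formula.

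More concretely, I would proceed as follows. First, recall that any knot diagram $D$ with $n$ vertices divides $S^2$ into exactly $n+2$ regions, so $M$ is an $n\times m$ matrix with $m=n+2$, and hence $m-n=2$. Next, invoke Theorem \ref{propadj} to conclude that $D$ is always solvable in the version of the $k$-color region select game corresponding to $M$ (this is where all the geometric/graph-theoretic work has already been done, via Lemma \ref{mainlemma} and Proposition \ref{prop0}). Finally, apply Proposition \ref{propker}, which states that always solvability is equivalent to $|Ker_k(M)|=k^{m-n}$, and substitute $m-n=2$ to get $|Ker_k(M)|=k^2$.

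There is essentially no obstacle here: the theorem is a direct corollary of results already proved. If anything, the only thing to double-check is that the version of $G$ appearing in Theorem \ref{propadj} is the same as the one giving rise to $M$ in this statement — which it is, since both sweep over all admissible choices of increment numbers for $k<\infty$. Thus the proof should be just a couple of lines referencing the fact $m=n+2$, Theorem \ref{propadj}, and Proposition \ref{propker}.
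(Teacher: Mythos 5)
Your proposal is correct and matches the paper's own proof, which cites exactly Proposition \ref{propker} and Theorem \ref{propadj}; the only added detail you supply is the explicit remark that $m-n=2$, which the paper also uses (it states this in the proof of Theorem \ref{propadj}). No issues.
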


\begin{proof}
This follows from  Proposition \ref{propker} and Theorem \ref{propadj}.

\end{proof}

\begin{prop}
\label{thmker}
For any knot diagram $D$, there are $k^2$ number of solving push patterns for each initial color configuration in any version of the $k$-color region select game for  $k<\infty$.
\end{prop}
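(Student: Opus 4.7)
The plan is essentially to combine the two preceding results with Fact \ref{fact3}. By Theorem \ref{propadj}, every knot diagram $D$ is always solvable in any version of the $k$-color region select game, so in particular every initial color configuration $\mathbf{c} \in \mathbb{Z}_k^n$ lies in $Col_k(M)$ and admits at least one solving push pattern.

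Next, I would invoke Fact \ref{fact3}, which states that whenever a configuration $\mathbf{c}$ is solvable, the set of its solving patterns is a coset of $Ker_k(M)$ in $\mathbb{Z}_k^m$, and therefore has cardinality exactly $|Ker_k(M)|$. Concretely, if $\mathbf{p}_0$ is one solving pattern for $\mathbf{c}$, then the full set of solving patterns is $\{\mathbf{p}_0 + \boldsymbol{\ell} : \boldsymbol{\ell} \in Ker_k(M)\}$, by the linearity of the defining equation $M\mathbf{p} = -\mathbf{c}$.

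Finally, by the previously established Theorem \ref{thmker}, we have $|Ker_k(M)| = k^2$. Putting these facts together, every initial color configuration has exactly $k^2$ solving patterns, which is what we wanted. There is no real obstacle here; the proposition is a direct corollary of what has already been proved, and the proof is a one-line chain of implications: always solvability $+$ coset-size counting $+$ kernel cardinality $= k^2$.
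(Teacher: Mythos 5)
Your proposal is correct and follows essentially the same route as the paper: the paper's proof is a one-line citation of Fact \ref{fact3} together with Theorem \ref{propadj} (with the kernel cardinality $|Ker_k(M)|=k^2$ supplied by the immediately preceding theorem, itself a consequence of Proposition \ref{propker} and always solvability). Nothing is missing.
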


\begin{proof}
This follows directly from Fact \ref{fact3} and Theorem \ref{propadj}.

\end{proof}

We also have the following proposition.

\begin{prop}
\label{propab}
Let $D$ be a knot diagram on which we play a version of the $k$-color region select game, where $k\leq\infty$. Let $a, b \in \mathbb{Z}_k$. Fix two regions adjacent to each other. Then, for any initial color configuration, there is a unique solving pattern where one of the regions is pushed $a$ times and the other is pushed $b$ times. In particular, any null pattern of any game matrix of $D$ over  $\mathbb{Z}_k$ is uniquely determined by its value on two adjacent regions.
\end{prop}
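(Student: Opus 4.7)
The plan is to reduce this statement to Theorem \ref{propadj} by a translation trick: pushing two regions a prescribed number of times amounts to solving a shifted initial color configuration while forbidding those two regions from being pushed. First I would denote the two adjacent regions by $r_1$ and $r_2$, let $M$ be the fixed game matrix, and write $\mathbf{m}_1, \mathbf{m}_2 \in \mathbb{Z}_k^n$ for the columns of $M$ corresponding to $r_1$ and $r_2$. Given an initial color configuration $\mathbf{c}$, I would introduce the shifted configuration
\begin{equation*}
\mathbf{c}' \;=\; \mathbf{c} \,+\, a\,\mathbf{m}_1 \,+\, b\,\mathbf{m}_2,
\end{equation*}
and let $\mathbf{e}_i \in \mathbb{Z}_k^m$ denote the standard basis vector supported at the coordinate of $r_i$.

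Existence. By Theorem \ref{propadj} applied to $\mathbf{c}'$ and the adjacent pair $r_1, r_2$, there exists a push pattern $\mathbf{p}'$ with $M\mathbf{p}' = -\mathbf{c}'$ and $\mathbf{p}'(r_1) = \mathbf{p}'(r_2) = 0$. Setting $\mathbf{p} := \mathbf{p}' + a\mathbf{e}_1 + b\mathbf{e}_2$, a direct computation gives $M\mathbf{p} = M\mathbf{p}' + a\mathbf{m}_1 + b\mathbf{m}_2 = -\mathbf{c}' + a\mathbf{m}_1 + b\mathbf{m}_2 = -\mathbf{c}$, while $\mathbf{p}(r_1) = a$ and $\mathbf{p}(r_2) = b$ by construction. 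So $\mathbf{p}$ is a solving pattern with the required values.

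Uniqueness. Suppose $\mathbf{p}$ and $\mathbf{q}$ are two solving patterns for $\mathbf{c}$ both satisfying the constraints at $r_1, r_2$. Then $\boldsymbol{\ell} := \mathbf{p} - \mathbf{q}$ satisfies $M\boldsymbol{\ell} = 0$, so it is a null pattern of $M$, and $\boldsymbol{\ell}(r_1) = \boldsymbol{\ell}(r_2) = 0$. By Proposition \ref{prop0}, the only such null pattern is $\mathbf{0}$, so $\mathbf{p} = \mathbf{q}$.

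For the "In particular" clause I would run essentially the same uniqueness argument in the homogeneous setting: given two null patterns $\boldsymbol{\ell}_1, \boldsymbol{\ell}_2 \in Ker_k(M)$ agreeing on two adjacent regions $r_1$ and $r_2$, their difference $\boldsymbol{\ell}_1 - \boldsymbol{\ell}_2$ is a null pattern vanishing on these two adjacent regions, hence trivial by Proposition \ref{prop0}. There is no real obstacle here — the entire content is already packaged in Theorem \ref{propadj} and Proposition \ref{prop0}; the only thing to be careful about is that the shift works uniformly over $\mathbb{Z}_k$ for all $k \leq \infty$ (it does, since $a$ and $b$ are scalars in $\mathbb{Z}_k$ and column-vector arithmetic makes sense in both the finite and integral settings).
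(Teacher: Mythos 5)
Your proposal is correct and follows essentially the same route as the paper: the shifted configuration $\mathbf{c}' = \mathbf{c} + a\mathbf{m}_1 + b\mathbf{m}_2$ is exactly the paper's $\widetilde{\mathbf{c}} = \mathbf{c} + M(0,\dots,0,a,b)^t$, existence is obtained by translating the solution of Theorem \ref{propadj} back by $a\mathbf{e}_1 + b\mathbf{e}_2$, and uniqueness (including the ``in particular'' clause) follows from Proposition \ref{prop0} applied to the difference of two candidate patterns, just as in the paper.
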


\begin{proof}
Let $M$ be a game matrix of $D$ over $\mathbb{Z}_k$, $\mathbf{c}$ be an initial color configuration of vertices of $D$. Assume that the adjacent regions we fix corresponds to the last two columns of $M$. Then, consider the color configuration
\begin{equation}
\mathbf{\widetilde{c}}:= \mathbf{c}+ M\begin{bmatrix}
           0 \\
           \vdots \\
           0\\
           a\\
           b
         \end{bmatrix}.
\end{equation}
By Theorem \ref{propadj}, there is a unique solving push pattern $(p_1,...,p_n,0,0)^t$ for $\mathbf{\widetilde{c}}$, where $n$ is the number of vertices of $D$. Hence,

\begin{equation}
M\begin{bmatrix}
           p_1 \\
           \vdots \\
           p_n\\
           0\\
           0
         \end{bmatrix}= -\mathbf{c}- M\begin{bmatrix}
           0 \\
           \vdots \\
           0\\
           a\\
           b
         \end{bmatrix},
\end{equation}
which implies $M\mathbf{p}=-\mathbf{c}$, where $\mathbf{p}= (p_1,...,p_n,a,b)^t$. Hence $\mathbf{p}$ is a desired solving pattern for $\mathbf{c}$. For uniqueness, assume that there is another solving pattern $\mathbf{q}:=(q_1,...,q_n,a,b)^t$ for $\mathbf{c}$. Then, $\mathbf{p}-\mathbf{q}$ would be a null pattern of $M$ where two adjacent regions are not pushed. By Proposition \ref{prop0}, $\mathbf{p}=\mathbf{q}$.

\end{proof}

\section{Game on reduced knot diagrams}\label{sec:reduced}

In this section, we examine the $k$-color region select game further for reduced knot diagrams.

\begin{definition}\normalfont

A shading of the regions of a link diagram $D$ is called a \textit{checkerboard shading} if for any pair of adjacent regions of $D$, one of the regions is shaded and the other one is unshaded. It is well-known that all link diagrams admit a checkerboard shading \cite{Ka}.

\end{definition}

\begin{thm}
\label{thm2}
Let $D$ be a reduced knot diagram with $n$ vertices on which we play the $2$-color region select game. Fix a checkerboard shading on $D$. Then, any initial color configuration can be solved uniquely without pushing one shaded and one unshaded region.

In general, there are $2^{n+2-i}$ number of initial color configurations  which can be solved without pushing $i$ number of regions which contains one shaded and one unshaded region. Moreover, there are $2^{n+1-i}$ number of initial color configurations  which can be solved without pushing $i$ number of shaded regions or $i$ number of unshaded regions.

\end{thm}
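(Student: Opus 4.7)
My plan is to identify $Ker_2(M)$ explicitly for the game matrix $M$ of a reduced knot diagram $D$ with a fixed checkerboard shading, and then, for each prescribed set of $i$ regions, count which null patterns vanish on those regions. By Proposition \ref{propmn}, if $j$ is this count then the number of solvable initial color configurations without pushing those regions is $2^{m-i}/j$, where $m=n+2$ is the number of regions.

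The key step is to show that $Ker_2(M) = \{\mathbf{0},\mathbf{s},\mathbf{u},\mathbf{1}\}$, where $\mathbf{s}$ and $\mathbf{u}$ are the characteristic vectors of the shaded and unshaded regions, respectively, and $\mathbf{1}$ is the all-ones vector. The geometric input is that the checkerboard shading alternates around each vertex: diagonally opposite regions at a crossing share the same shading, so every vertex has exactly two shaded and two unshaded incident regions. Because $D$ is reduced, every vertex is irreducible, and every entry of $M$ equals $0$ or $1$. Hence $(M\mathbf{s})(v)=2\equiv 0$, $(M\mathbf{u})(v)=2\equiv 0$, and $(M\mathbf{1})(v)=4\equiv 0$ modulo $2$ for every vertex $v$, so all three vectors lie in $Ker_2(M)$. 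Since any knot diagram admitting a checkerboard shading has at least one shaded and at least one unshaded region, these four patterns are pairwise distinct, and Theorem \ref{thmker} forces $|Ker_2(M)|=4$, so they exhaust the kernel.

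With the kernel determined, each claim reduces to a short case analysis on which null patterns push one of the fixed regions. If the $i$ chosen regions include at least one shaded and at least one unshaded region, then $\mathbf{s}$ pushes a shaded one, $\mathbf{u}$ pushes an unshaded one, and $\mathbf{1}$ pushes all of them, leaving only $\mathbf{0}$. So $j=1$ and Proposition \ref{propmn} gives $2^{n+2-i}$ solvable configurations. In the special case $i=2=m-n$, the second part of Proposition \ref{propmn} upgrades this to unique solvability for every initial color configuration, which is the first assertion of the theorem. If instead all $i$ chosen regions are shaded, then $\mathbf{s}$ and $\mathbf{1}$ push every one of them while $\mathbf{0}$ and $\mathbf{u}$ push none, so $j=2$ and Proposition \ref{propmn} gives $2^{n+1-i}$ solvable configurations; the all-unshaded case is symmetric.

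The only nontrivial point is establishing the alternation of shading at each vertex, but this is immediate from the defining property of a checkerboard shading: adjacent regions have different shadings, and the four regions at a crossing split into adjacent pairs along the two local strands, forcing the two diagonally opposite regions to agree. Everything else is bookkeeping with Proposition \ref{propmn} and Theorem \ref{thmker}, and no further knot-theoretic input is needed.
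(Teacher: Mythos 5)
Your proof is correct and follows essentially the same route as the paper: identify the four null patterns $\mathbf{0},\mathbf{s},\mathbf{u},\mathbf{1}$ (the paper's $\boldsymbol{\ell}_0,\dots,\boldsymbol{\ell}_3$), use Theorem \ref{thmker} to conclude they exhaust $Ker_2(M)$, and then count via Proposition \ref{propmn}. The only difference is that you spell out the alternation of the shading around each (irreducible) vertex, which the paper leaves as ``easy to see.''
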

\begin{proof}
 Take a checkerboard shading of $D$. Consider the following push patterns $\boldsymbol{\ell}_0$, $\boldsymbol{\ell}_1$, $\boldsymbol{\ell}_2$, and $\boldsymbol{\ell}_3$, where $\boldsymbol{\ell}_0$ is the zero pattern; $\boldsymbol{\ell}_1$ is the pattern where only shaded regions are pushed; $\boldsymbol{\ell}_2$ is the pattern where only unshaded regions are pushed; and $\boldsymbol{\ell}_3$ is the pattern where all regions are pushed. It is easy to see that all of these are null patterns of the incidence matrix $M_0(D)$ which corresponds to the $2$-color region select game matrix of $D$. Moreover, they form the set of all nonzero null patterns since $Ker_2(M_0)=4$ by Theorem \ref{thmker}. Note that the only null pattern where at least one shaded and one unshaded region are not pushed is the zero pattern $\boldsymbol{\ell}_0$. The null patterns where any number of unshaded regions are not pushed are $\boldsymbol{\ell}_0$ and $\boldsymbol{\ell}_1$. And lastly, the null patterns where  any number of shaded regions are not pushed are $\boldsymbol{\ell}_0$ and $\boldsymbol{\ell}_2$. Hence, the result follows by Proposition \ref{propmn} .

\end{proof}

\begin{definition}\normalfont
The \textit{distance} $d(r_1,r_2)$ between two regions $r_1$ and $r_2$ of a link diagram $D$ is defined to be the distance between the vertices corresponding to $r_1$ and $r_2$ in the dual graph of $D$.
\end{definition}

\begin{lem}
\label{lemdis}
Let $D$ be a reduced knot diagram and $\boldsymbol{\ell}$ be a null pattern of a game matrix $M$ of $D$ over $\mathbb{Z}_k$ where $k\leq \infty$. Let $s\in \mathbb{Z}_k$ be the push number of some edge $e$ of $D$ with respect to $\boldsymbol{\ell}$. Fix a checkerboard shading on $D$. Let $r_1$ and $r_2$ be two shaded or two unshaded regions. Then $\boldsymbol{\ell}(r_1)= \boldsymbol{\ell}(r_2) +2is$ mod $k$, where $i$ is an integer satisfying $|2i|\leq d(r_1,r_2)$.
\end{lem}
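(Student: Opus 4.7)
The proof will proceed by induction on the dual-graph distance $d(r_1,r_2)$. The key input is Lemma \ref{lempush}, which tells us that the push number $\sigma_{\boldsymbol{\ell}}(e')$ of any edge $e'$ of $D$ is either $s$ or $-s$ (this uses that the chosen $s$ from that lemma is the same global constant, regardless of the edge). Combined with the elementary observation that the checkerboard shading forces any path in the dual graph between two regions of the same shade to have even length, this reduces the problem to a clean two-step analysis.

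\textbf{Base case.} If $d(r_1,r_2)=0$ then $r_1=r_2$ and the claim holds with $i=0$.

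\textbf{Inductive step.} Assume the statement holds for all pairs of same-shaded regions at dual distance strictly less than $2n$, and suppose $d(r_1,r_2)=2n>0$. Pick a shortest dual path
\[
r_1=\rho_0,\ \rho_1,\ \rho_2,\ \ldots,\ \rho_{2n}=r_2,
\]
and let $e_1$ (respectively $e_2$) be the edge of $D$ separating $\rho_0$ and $\rho_1$ (respectively $\rho_1$ and $\rho_2$). Because adjacent regions have opposite shading, $\rho_2$ is shaded exactly like $r_1$. By the definition of push numbers,
\[
\boldsymbol{\ell}(\rho_0)+\boldsymbol{\ell}(\rho_1)=\sigma_{\boldsymbol{\ell}}(e_1)\in\{s,-s\},\qquad \boldsymbol{\ell}(\rho_1)+\boldsymbol{\ell}(\rho_2)=\sigma_{\boldsymbol{\ell}}(e_2)\in\{s,-s\},
\]
and subtracting gives
\[
\boldsymbol{\ell}(r_1)-\boldsymbol{\ell}(\rho_2)=\sigma_{\boldsymbol{\ell}}(e_1)-\sigma_{\boldsymbol{\ell}}(e_2)\in\{-2s,\,0,\,2s\}.
\]
Hence $\boldsymbol{\ell}(r_1)=\boldsymbol{\ell}(\rho_2)+2i_0 s$ for some $i_0\in\{-1,0,1\}$, with $|2i_0|\le 2$. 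Since $\rho_2$ lies on a shortest path, $d(\rho_2,r_2)=2n-2$, so by the inductive hypothesis (applied to the same-shaded pair $\rho_2,r_2$) we obtain an integer $i_1$ with $|2i_1|\le 2n-2$ and $\boldsymbol{\ell}(\rho_2)=\boldsymbol{\ell}(r_2)+2i_1 s$ modulo $k$. Setting $i:=i_0+i_1$,
\[
\boldsymbol{\ell}(r_1)=\boldsymbol{\ell}(r_2)+2is\pmod{k},\qquad |2i|\le |2i_0|+|2i_1|\le 2+(2n-2)=d(r_1,r_2),
\]
which closes the induction.

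\textbf{Where the work lies.} The only mildly delicate point is invoking Lemma \ref{lempush} with a single global $s$: that lemma is stated so that the constant $s$ is independent of the chosen edge, which is exactly what lets us identify $\sigma_{\boldsymbol{\ell}}(e_1)$ and $\sigma_{\boldsymbol{\ell}}(e_2)$ as both lying in $\{s,-s\}$ for the same $s$ appearing in the conclusion. Beyond this, the argument is a routine telescoping along a geodesic in the dual graph, so the bound $|2i|\le d(r_1,r_2)$ comes out essentially for free from the triangle inequality.
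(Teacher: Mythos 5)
Your proof is correct and takes essentially the same approach as the paper: the paper works out the distance-$2$ case by the identical telescoping through an intermediate region $r$ (using Lemma \ref{lempush} to put both push numbers in $\{s,-s\}$) and then appeals to induction on the distance, which is exactly the induction you have written out in full. Your version merely makes explicit the bipartiteness of the dual graph under the checkerboard shading and the bookkeeping of the bound $|2i|\leq d(r_1,r_2)$, both of which the paper leaves implicit.
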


\begin{proof}
Consider the case where $d(r_1,r_2)=2$. So there is a region, call it $r$, which is adjacent to both $r_1$ and $r_2$. Let $e_1$ and $e_2$ be the edges incident to $r_1$, $r$ and $r_2$, $r$, respectively. Then, $\boldsymbol{\ell}(r_1)- \boldsymbol{\ell}(r_2)=\boldsymbol{\ell}(r_1)+\boldsymbol{\ell}(r)-  \boldsymbol{\ell}(r)- \boldsymbol{\ell}(r_2)=\sigma_{\ell}(e_1)-\sigma_{\ell}(e_2)$. On the other hand, $\sigma_{\ell}(e_1)= s$ or $-s$, similarly $\sigma_{\ell}(e_2)= s$ or $-s$ by Lemma \ref{lempush}. Considering every possible case, we obtain $\boldsymbol{\ell}(r_1)- \boldsymbol{\ell}(r_2)=0$, $-2s$, or $2s$.
 The general case follows by applying induction on the distance of $r_1$ and $r_2$.
 \end{proof}

 \begin{thm}
 \label{thmp}
 Let $D$ be a reduced knot diagram on which we play a version of the $k$-color region select game, where $k< \infty$. Fix a checkerboard shading on $D$. Then, for $k=2^n$, $n\in \mathbb{N}$, any initial color configuration can be solved uniquely without pushing one shaded and one unshaded region.

 For other values of $k$, let $p$ be the smallest odd prime factor of $k$.  Then, any initial color configuration can be solved uniquely without pushing one shaded and one unshaded region if the distance between the regions is less than $p$.
 \end{thm}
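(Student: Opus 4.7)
The plan is to invoke Proposition \ref{propmn}: since any knot diagram has exactly two more regions than vertices, unique solvability without pushing two specified regions is equivalent to the only null pattern of the game matrix $M$ vanishing on those two regions being the trivial pattern $\mathbf{0}$. So I fix a shaded region $r_1$ and an unshaded region $r_2$ (with $d(r_1,r_2) < p$ in the second case), let $\boldsymbol{\ell}$ be any null pattern of $M$ satisfying $\boldsymbol{\ell}(r_1) = \boldsymbol{\ell}(r_2) = 0$, and aim to conclude $\boldsymbol{\ell} = \mathbf{0}$.

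First I would orient $D$ and choose a shortest path $r_1 = s_0, s_1, \ldots, s_d = r_2$ in the dual graph, writing $e_i$ for the edge of $D$ separating $s_i$ from $s_{i+1}$. Lemma \ref{lempush} produces a single $s \in \mathbb{Z}_k$ with $\sigma_{\boldsymbol{\ell}}(e_i) = \epsilon_i s$ for some $\epsilon_i \in \{-1,+1\}$. Combining $\sigma_{\boldsymbol{\ell}}(e_i) = \boldsymbol{\ell}(s_i) + \boldsymbol{\ell}(s_{i+1})$ with $\boldsymbol{\ell}(s_0) = 0$, a direct induction on $i$ yields
\[
\boldsymbol{\ell}(s_d) = \Bigl(\sum_{i=0}^{d-1}(-1)^{d-1-i}\epsilon_i\Bigr)\,s =: m s.
\]
The checkerboard shading makes the dual graph bipartite with shading classes as parts, so the distance $d$ between a shaded and an unshaded region is odd; hence $m$ is a sum of $d$ terms each equal to $\pm 1$, and is therefore itself an odd integer with $|m| \le d$. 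Combined with $\boldsymbol{\ell}(s_d) = 0$, this reads $m s = 0$ in $\mathbb{Z}_k$.

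The crux of the argument is to promote $ms = 0$ to $s = 0$ by showing that $m$ is a unit modulo $k$. When $k = 2^n$ any odd integer is coprime to $k$, so this is immediate. For general $k$, the assumption $d < p$ gives $|m| \le d < p$; since $m$ is a nonzero odd integer, every prime factor of $m$ is an odd prime strictly smaller than $p$, and these are coprime to $k$ by the minimality of $p$. Thus $\gcd(m,k) = 1$ in both cases and $s = 0$. This elementary number-theoretic step is the only genuinely nontrivial ingredient; the rest is bookkeeping with the two earlier lemmas and the bipartite parity of path lengths.

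Once $s = 0$, Lemma \ref{lempush} yields $\sigma_{\boldsymbol{\ell}}(e) = 0$ on every edge of $D$, and Lemma \ref{lemdis} then forces $\boldsymbol{\ell}(r) = \boldsymbol{\ell}(r_1) = 0$ for every shaded region and $\boldsymbol{\ell}(r) = \boldsymbol{\ell}(r_2) = 0$ for every unshaded region. Hence $\boldsymbol{\ell} = \mathbf{0}$, and Proposition \ref{propmn} (applied with the two fixed regions, using $m - n = 2$ for knot diagrams) delivers the claimed unique solvability in both cases.
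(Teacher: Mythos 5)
Your proposal is correct and follows essentially the same route as the paper: reduce via Proposition \ref{propmn} to showing that a null pattern vanishing on the two fixed regions is trivial, use Lemma \ref{lempush} to get a single value $s$ with every edge push number $\pm s$, telescope along a shortest dual path to obtain $(\text{odd coefficient})\cdot s=0$ with the coefficient bounded by the distance, and conclude the coefficient is a unit mod $k$ under either hypothesis. The only cosmetic difference is that you re-derive the content of Lemma \ref{lemdis} inline along a path from $r_1$ all the way to $r_2$, whereas the paper applies Lemma \ref{lemdis} to $r_1$ and a shaded neighbor $r$ of $r_2$ and then finishes with Proposition \ref{prop0}; both yield the same odd coefficient $2i+1$ with absolute value at most $d(r_1,r_2)$.
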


 \begin{proof}

Let $r_1$ be a shaded and $r_2$ be an unshaded region. Let $M$ be the game matrix of $D$ over $\mathbb{Z}_k$ corresponding to the version of the game we play on $D$. Let $\boldsymbol{\ell}$ be a null pattern of $M$, on which $r_1$ and $r_2$ are not pushed. Let $r$ be a shaded region, adjacent to $r_2$, such that $d(r_1,r_2)=d(r_1,r)+1$. Note that, if $e$ is an edge between $r_2$ and $r$, then $\sigma_{\boldsymbol{\ell}}(e)=\boldsymbol{\ell}(r)$ since $\boldsymbol{\ell}(r_2)=0$. Hence, by Lemma \ref{lemdis}, we have
\begin{equation}
\label{eqn2i}
0=\boldsymbol{\ell}(r_1)= (2i+1)\boldsymbol{\ell}(r) \mod k,
\end{equation}
where $|2i|\leq d(r_1,r)$. If $k=2^n$ for some $n\in \mathbb{N}$, then $2i+1$ mod $k$ cannot be a zero divisor of $\mathbb{Z}_k$, hence  (\ref{eqn2i}) implies $\boldsymbol{\ell}(r)=0$.

For other values of $k$, assume further that $d(r_1,r_2) < p$.  Note that $|2i+1|\leq |2i|+1 \leq d(r_1,r_2)< p $. Hence, $2i+1$ mod $k$  cannot be a zero divisor of $\mathbb{Z}_k$, and therefore  $\boldsymbol{\ell}(r)=0$ for this case as well.

Since $r_2$ and $r$ are adjacent, and  $\boldsymbol{\ell}(r)=\boldsymbol{\ell}(r_2)=0$, we have $\boldsymbol{\ell}=\boldsymbol{0}$ by Proposition \ref{prop0}. Then the result follows by Proposition \ref{propmn}.

 \end{proof}

\subsection{Game on reduced alternating sign diagrams}\label{sec:reducedalternating}

Take a checkerboard shading of a link diagram $L$. Assume that one of the subsets of regions, shaded or unshaded ones, admits an alternating $``+, -"$ signing where every vertex is incident to two regions with opposite signs, as exemplified in Figure \ref{fig:alternating}. Then, the subset of regions which admits such signing is called an \textit{alternating subset of regions}.
\begin{definition}\normalfont
 A link diagram that has an alternating subset of its regions is called an \textit{alternating sign diagram}.
 \end{definition}

 We have the following proposition.

\begin{figure}[H]
\centering
\includegraphics[scale=.25]{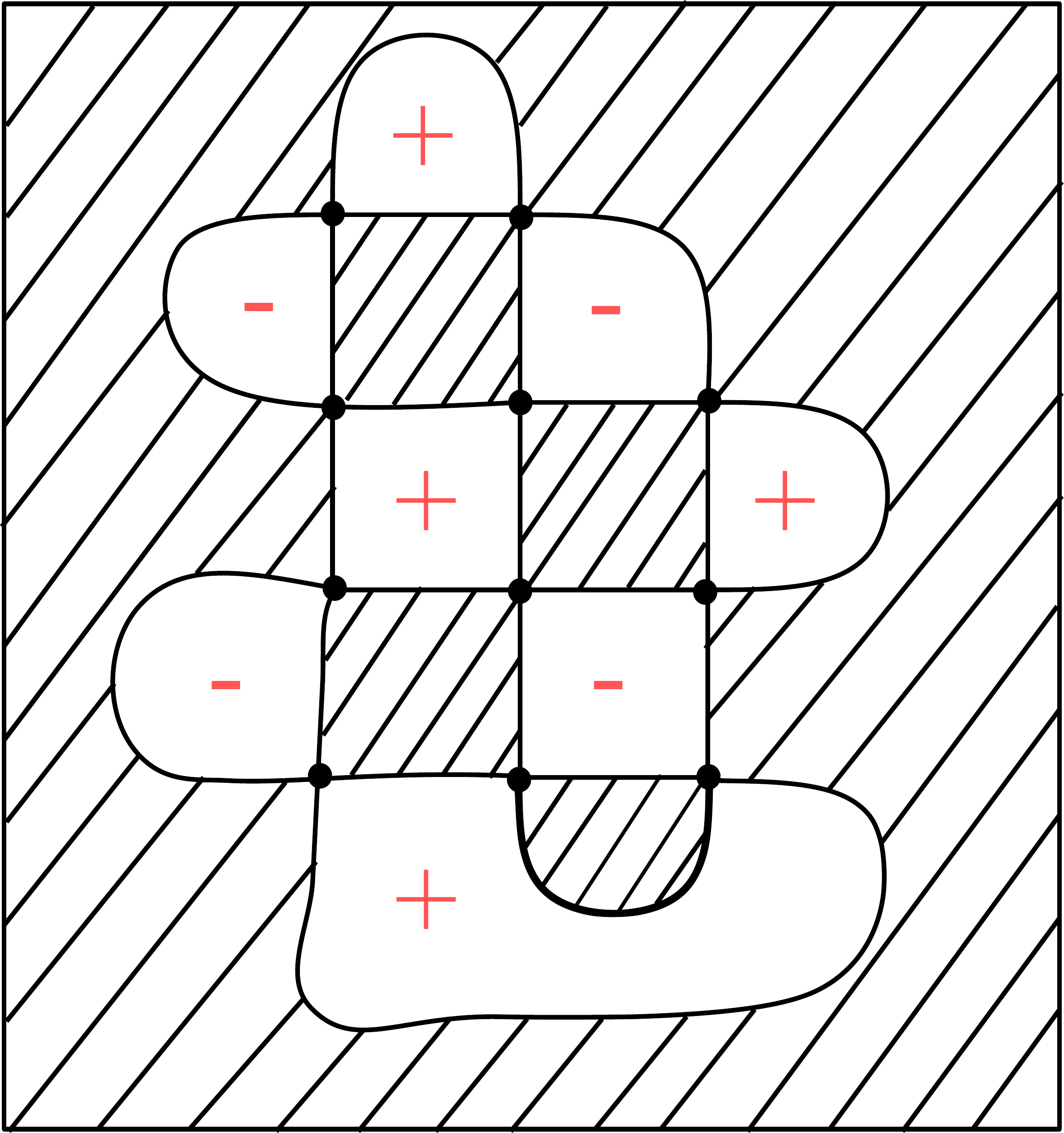}
\caption{An alternating sign diagram}
\label{fig:alternating}

\end{figure}


\begin{prop}
Take a checkerboard shading of a  link diagram $L$. Then, the unshaded regions are alternating if and only if each connected component of the boundary of each shaded
region, except the simple loop ones, have all even number of edges, and vice versa.
\end{prop}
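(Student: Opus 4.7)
The plan is to recast the existence of an alternating signing as a bipartiteness condition on a planar auxiliary graph, and then invoke the classical fact that a planar graph embedded in $S^{2}$ is bipartite if and only if every face has an even-length boundary walk. Concretely, I introduce the graph $G_{u}$ whose vertices are the unshaded regions of $L$ and whose edges are in bijection with the crossings of $L$; the edge associated with a crossing $v$ joins the two unshaded regions diagonal at $v$, becoming a loop when both unshaded corners at $v$ lie in the same region. Placing each vertex of $G_{u}$ inside its region and routing each edge through the corresponding crossing embeds $G_{u}$ in $S^{2}$. A $+/-$ signing of the unshaded regions that is alternating in the sense of the definition is exactly a proper $2$-coloring of $G_{u}$, so the alternating property is equivalent to bipartiteness of $G_{u}$.

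For the forward direction I would argue directly. Let $S$ be a shaded region and let $C$ be a non-simple-loop boundary component of $\partial S$, with edges $e_{1},\dots,e_{k}$ appearing in cyclic order; let $U_{i}$ be the unshaded region across $e_{i}$ from $S$. At the vertex shared by $e_{i-1}$ and $e_{i}$, the two regions $U_{i-1}$ and $U_{i}$ are precisely the two diagonal unshaded corners, so in an alternating signing $\mathrm{sign}(U_{i})=-\mathrm{sign}(U_{i-1})$. Traversing the cycle once forces $(-1)^{k}=1$, so $k$ must be even.

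For the converse direction I would invoke the planar bipartiteness criterion (face cycles generate the cycle space of $G_{u}$ over $\mathbb{Z}_{2}$, and the parity of edge-count is additive under symmetric difference). This reduces the problem to identifying the faces of $G_{u}$ with the non-simple-loop boundary components of shaded regions of $L$: each face of $G_{u}$ lies in a small neighborhood of some shaded region $S$, and its boundary walk traverses exactly the edges of $G_{u}$ coming from the crossings on one boundary component $C$ of $\partial S$, in the same cyclic order, so the walk length equals the number of edges of $L$ on $C$. Simple-loop components of $L$ contribute no edges to $G_{u}$, so simple-loop boundary components do not correspond to any face of $G_{u}$---this is exactly why they are excluded from the hypothesis. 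Assuming all non-simple-loop boundary components have even edge-count then gives even length for every face boundary walk, hence $G_{u}$ is bipartite, so an alternating signing exists. The ``vice versa'' statement follows immediately by swapping the roles of shaded and unshaded throughout.

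The main obstacle is the geometric identification in the previous paragraph (faces of $G_{u}$ versus non-simple-loop boundary components of shaded regions), particularly in the presence of reducible crossings, which produce loops or multi-edges in $G_{u}$ and cause the associated boundary walks to revisit vertices. I would handle this by working with the combinatorial embedding of $G_{u}$ in $S^{2}$ (rotation system at each vertex) and tracking half-edges/darts rather than only edges, so that each passage through a reducible crossing is counted with its correct multiplicity on both sides of the correspondence.
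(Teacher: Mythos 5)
Your proof is correct in outline but takes a genuinely different route from the paper on the harder direction. The forward implication is essentially the paper's argument in both cases: signs of the unshaded corners must alternate as one traverses a non-loop boundary component of a shaded region, forcing even length. For the converse, the paper proceeds by induction on the number of crossings, removing a vertex by an oriented smoothing (a curl smoothing if every vertex is reducible, otherwise a smoothing of an irreducible vertex that merges the two shaded corners) and checking that the inductive hypothesis is preserved and that the signing lifts back. You instead pass to the Tait-type graph $G_u$ on the unshaded regions, observe that an alternating signing is exactly a proper $2$-coloring of $G_u$, and invoke the classical fact that a plane graph is bipartite if and only if every face has even degree. Your route is more conceptual and avoids the case analysis and the verification that smoothings preserve the hypothesis; the paper's induction is more elementary and self-contained. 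The price of your approach is precisely the identification you flag at the end, and I would sharpen one point there: for a disconnected diagram $L$ the clean statement ``faces of $G_u$ correspond to non-simple-loop boundary components of shaded regions'' is not quite right, since a single face of $G_u$ may contain several shaded regions (merged across crossingless loop components) and hence have several facial walks. The correct correspondence is between \emph{facial walks} of $G_u$ and non-loop boundary components of shaded regions, with each walk's length equal to the number of edges of $L$ on that component; since the degree of a face is the sum of the lengths of its facial walks, the even-degree criterion (which does hold for disconnected plane multigraphs with loops, by the cycle-space argument you cite) still applies. With that adjustment, and the dart-level bookkeeping you propose for reducible crossings (where $G_u$ acquires loops, correctly forcing non-bipartiteness on both sides of the equivalence), the argument goes through.
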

\begin{proof}
$(\Rightarrow)$ Let $\Gamma$ be a connected component of the boundary of a shaded region other than a loop. Take an alternating signing of unshaded regions and sign each edge of $\Gamma$ by the sign of its incident unshaded region. Then the signs of successive edges must be different while we travel along  $\Gamma$ in one direction. Otherwise, the vertex between two successive edges would be incident to two unshaded regions with the same sign,  which contradicts with the definition of the alternating signing. Hence, the signs of edges alternate while we travel along $\Gamma$ in one direction. Since $\Gamma$ is connected this is only possible if $\Gamma$ has even number of edges.

$(\Leftarrow)$ Note that the claim holds true for the link diagrams with zero and one vertex. Suppose the claim holds true for all links with $n-1$ vertices. Now let $L$ be a link with $n$ vertices which satisfies the assumption of the claim. If $L$ does not have any irreducible vertex then it has a vertex on a curl. Removing this vertex with an oriented smoothing as in Figure \ref{fig:orientedsmooth} gives us a link $L'$ with $n-1$ vertices which also satisfies the assumption of the claim. By the induction hypothesis unshaded regions of $L'$ admits an alternating signing. Changing the sign of the region $r$, shown in Figure \ref{fig:orientedsmooth}, if necessary, we see that an alternating signing of unshaded regions of $L'$ induces an alternating signing of unshaded regions of $L$ by reversing the oriented smoothing operation while keeping the sings of the regions. If $L$ has an irreducible vertex $u$, apply a smoothing to $u$ so that the shaded regions incident to $u$ are connected, as shown in Figure \ref{fig:smoothing}. Then the resulting link $L''$ has $n-1$ vertices and it also satisfies the assumption of the claim. By induction hypothesis the unshaded regions of $L''$ admit an alternating signing. Note that the regions $r_1$ and $r_2$, shown in Figure \ref{fig:smoothing} must have opposite signs. Therefore by reversing the smoothing operation while keeping the signs of the unshaded regions of $L''$,  we obtain an alternating signing of the unshaded regions of $L$.
\end{proof}

\begin{figure}[H]
\centering
\includegraphics[scale=.15]{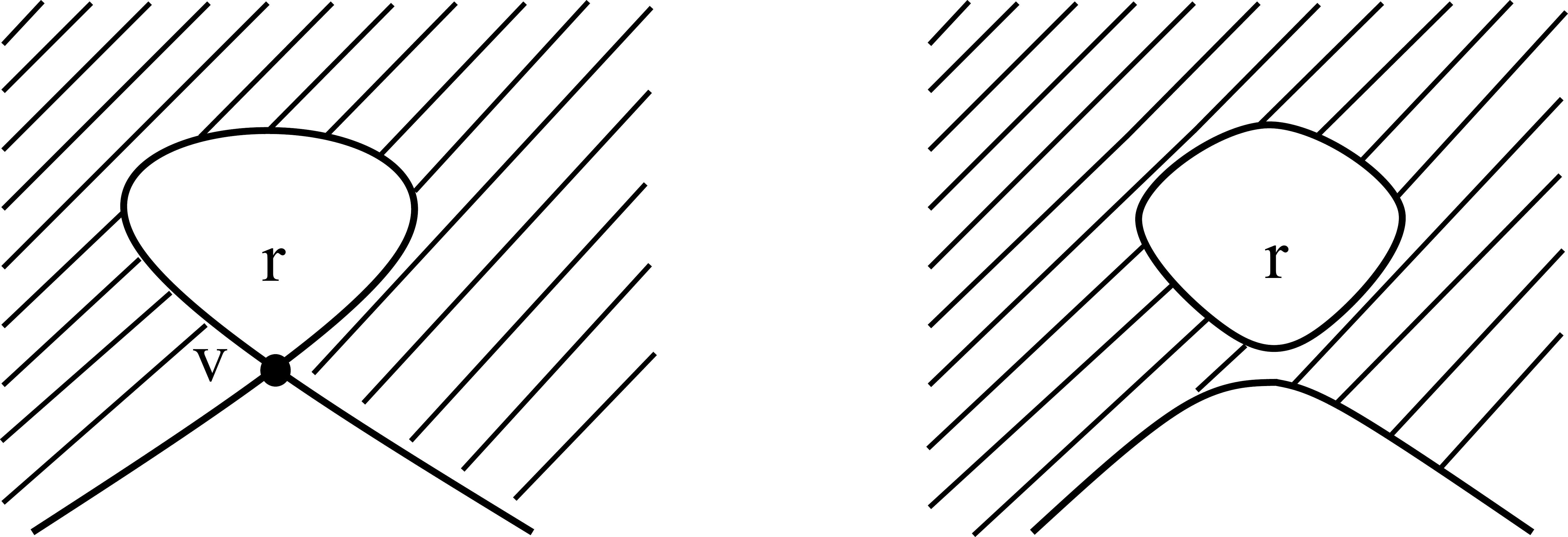}
\caption{Oriented smoothing of a vertex on a curl}
\label{fig:orientedsmooth}

\end{figure}

\begin{figure}[H]
\centering
\includegraphics[scale=.3]{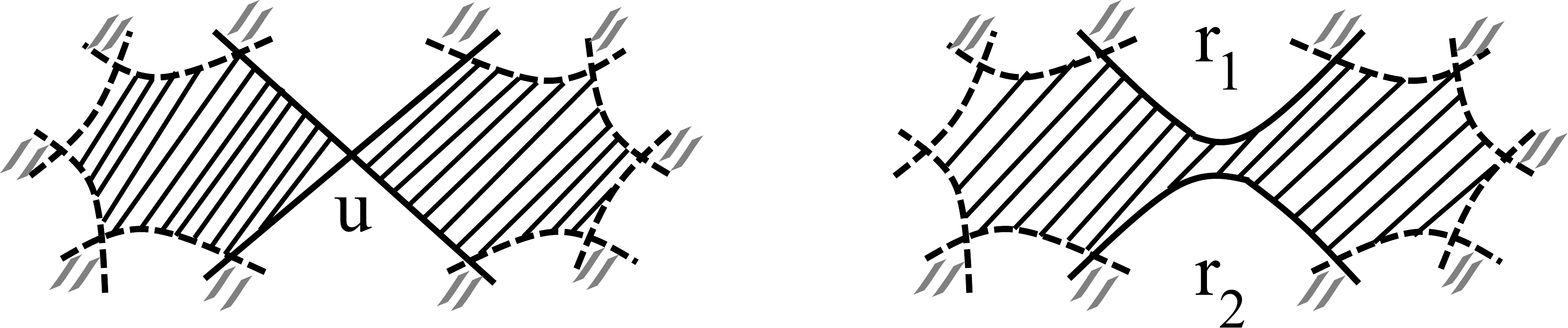}
\caption{A smoothing of an irreducible vertex}
\label{fig:smoothing}

\end{figure}

\begin{cor}
Take a checkerboard shading of a knot diagram $D$. Then, the unshaded regions are alternating if and only if all shaded regions are incident to even number of edges, and vice versa.
\end{cor}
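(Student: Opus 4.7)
The plan is to deduce the corollary directly from the preceding proposition by showing that, for a knot diagram with at least one crossing, both qualifiers in the proposition — the ``except the simple loop ones'' exception and the ``each connected component of the boundary of each shaded region'' quantifier — trivialize. Once this is checked, the hypothesis of the proposition collapses to the single statement that every shaded region has an even number of incident edges, which is exactly the corollary.

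The first step is the observation that a knot diagram $D$ is, topologically, the image of a single immersion $S^1 \to S^2$ with transverse self-intersections at the crossings. If $D$ has at least one crossing then the unique underlying component carries crossings, so $D$ has no loop components in the sense defined in Section~\ref{sec:prem}; consequently, no connected component of the boundary of any region can be a ``simple loop'' in the sense used in the proposition. Next, since the underlying $4$-valent plane graph is connected, each region is a topological open disk, and its boundary is therefore a single connected closed walk through vertices of $D$. Equivalently, in a connected plane graph every face boundary is a connected closed walk, so no region's boundary can split into several disjoint components. (The degenerate case of zero crossings, a single circle bounding two regions, may be dismissed as a vacuous instance.)

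With these two reductions, the statement of the proposition applied to $D$ becomes precisely the statement of the corollary: the unshaded regions admit an alternating signing if and only if the unique boundary walk of each shaded region has an even number of edges, i.e.\ if and only if every shaded region is incident to an even number of edges. The ``vice versa'' direction follows from the symmetry of the checkerboard shading, by exchanging the roles of shaded and unshaded regions in the argument just given.

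The only point requiring genuine care is the topological claim that in a knot diagram every region has a connected boundary; this is the step where we actually use that $D$ is a knot, not a general link. I would justify it either by induction on the number of crossings, using the oriented smoothings already employed in the proof of the preceding proposition (each smoothing preserves the connectedness of face boundaries in the relevant sense), or by appealing to the standard fact that a face boundary in a connected plane graph is a connected closed walk. Apart from this, the deduction is a direct specialization of the proposition and presents no further obstacle.
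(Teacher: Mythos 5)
Your proposal is correct and matches the paper's (implicit) argument: the corollary is stated without proof precisely because, for a knot diagram, the underlying $4$-valent graph is connected, so every region has a single connected boundary walk and no simple-loop boundary components occur, which collapses the proposition's hypothesis to the corollary's. Your explicit justification of the connected-boundary fact, and your dismissal of the zero-crossing circle as a degenerate case, are exactly the points one would need to make this deduction rigorous.
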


\begin{thm}
\label{thmk}
Let $D$ be a reduced knot diagram with $n$ vertices on which we play a version of the $k$-color region select game, where $k\leq \infty$. Assume that $D$ admits an alternating signing of its unshaded regions in a checkerboard shading of $D$. Then the followings hold.

1) Any initial color configuration can be solved uniquely without pushing one shaded and one unshaded region.

2) If $k$ is an odd number, then any initial color configuration can be solved uniquely without pushing two unshaded regions with opposite signs.

3) In general, let $S$ be a set of $i$ number of regions, and $q$ be the number of initial color configurations which can be solved  without pushing the regions in $S$. Then, $q=k^{n+2-i}$ if $S$ contains one shaded and one unshaded region. $q=k^{n+1-i}$ if $S$ consists of shaded regions or unshaded regions with the same sign. $q=k^{n+2-i}$ for $k$ is odd, and $q=k^{n+2-i}/2$ for $k$ is even if $S$ consists of unshaded regions not all of which have the same sign.

\end{thm}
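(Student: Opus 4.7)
To prove this, I plan to describe $Ker_k(M)$ explicitly in terms of two canonical null patterns, and then read off each assertion as a counting statement via Proposition \ref{propmn} (when $k<\infty$) or through a direct existence-uniqueness argument (when $k=\infty$). The first step is to introduce the two generators of the kernel. Let $\boldsymbol{\ell}_1$ assign $1$ to each shaded region and $-1$ to each unshaded region, and let $\boldsymbol{\ell}_2$ assign $0$ to each shaded region and $\pm 1$ to each unshaded region according to its sign in the alternating signing. Since $D$ is reduced, at each irreducible vertex $v$ the row of $M$ at $v$ is a non-zero-divisor multiple of the incidence row, so $\boldsymbol{\ell}$ is a null pattern of $M$ if and only if the four incident values sum to zero at every $v$. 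For $\boldsymbol{\ell}_1$ this sum is $(1+1)+(-1-1)=0$; for $\boldsymbol{\ell}_2$ it is $0+0+(+1)+(-1)=0$, because the two unshaded regions incident to $v$ have opposite signs by the alternating hypothesis. Both patterns thus lie in $Ker_k(M)$, and they are linearly independent since their values on any adjacent shaded-unshaded pair form a matrix of determinant $\pm 1$.

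The submodule $\mathbb{Z}_k\boldsymbol{\ell}_1+\mathbb{Z}_k\boldsymbol{\ell}_2$ therefore has exactly $k^2$ elements when $k<\infty$; comparison with $|Ker_k(M)|=k^2$ from Theorem \ref{thmker} forces equality. For $k=\infty$ the same conclusion follows from Proposition \ref{propab}: a null pattern is determined by its values on a pair of adjacent regions, and the map $(\alpha,\beta)\mapsto(\alpha,-\alpha+\beta)$ realizes every element of $\mathbb{Z}^2$ as values of $\alpha\boldsymbol{\ell}_1+\beta\boldsymbol{\ell}_2$ on a shaded-unshaded adjacent pair, so the two patterns again generate. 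Every null pattern $\boldsymbol{\ell}=\alpha\boldsymbol{\ell}_1+\beta\boldsymbol{\ell}_2$ thus takes the constant value $\alpha$ on shaded regions, $-\alpha+\beta$ on positively signed unshaded regions, and $-\alpha-\beta$ on negatively signed unshaded regions.

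With this parametrization each of the three assertions reduces to counting pairs $(\alpha,\beta)$ subject to a system of linear vanishing conditions on $S$. If $S$ contains both a shaded and an unshaded region, the conditions force $\alpha=0$ and $\pm\beta=0$, so only the trivial null pattern vanishes on $S$; Proposition \ref{propmn} gives $q=k^{n+2-i}$, and the $i=2$ case yields assertion (1). If $S$ consists of shaded regions only, or of unshaded regions sharing a common sign, exactly one linear condition is imposed on $(\alpha,\beta)$, so $j=k$ and $q=k^{n+1-i}$. If $S$ consists of unshaded regions with both signs present, the system reduces to $\beta=\alpha$ and $\beta=-\alpha$, i.e.\ $2\alpha=0$ with $\beta=\alpha$; this yields $j=1$ for odd $k$ (giving assertion (2) in the $i=2$ case) and $j=2$ for even $k$, which accounts for the parity split in the last clause of (3).

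The main subtlety I expect is assertion (1) in the case $k=\infty$, where Proposition \ref{propmn} is unavailable: there I will combine Theorem \ref{propadj}, which supplies some solving pattern $\mathbf{p}$ for any initial configuration, with a correction $\alpha\boldsymbol{\ell}_1+\beta\boldsymbol{\ell}_2$ tuned to cancel $\mathbf{p}$ at the two designated regions (existence of the correction is immediate from the explicit formulas above), and derive uniqueness from the fact that only the zero element of $Ker_\infty(M)$ vanishes on a shaded-unshaded pair.
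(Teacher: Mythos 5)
Your proof is correct and follows essentially the same route as the paper: both describe the kernel explicitly as a two-parameter family of null patterns that are constant on the shaded regions and on each sign class of unshaded regions (your $(\alpha,\beta)$ parametrization via $\boldsymbol{\ell}_1,\boldsymbol{\ell}_2$ is the paper's $\boldsymbol{\ell}_{a,b}$ with $a=\alpha$, $b=-\alpha+\beta$), and then count the linear vanishing conditions imposed by $S$ via Proposition \ref{propmn}. The only divergence is assertion (1) for $k=\infty$, where you correct a solving pattern supplied by Theorem \ref{propadj} by a kernel element instead of re-running that theorem's divisibility argument as the paper does; your version of that step is a little more direct and equally valid.
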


\begin{proof}

 We start by proving claim (3). Fix a version of the $k$-color region select game and let $M$ be the corresponding game matrix of $D$ over $\mathbb{Z}_k$.  By Proposition \ref{propmn},  $q=k^{n+2-i}/j$ where $j$ is the number of null patterns of $M$ where the regions in $S$ are not pushed. Hence we just need to determine what $j$ is for each case. To do that let us investigate the form of null patterns. Let $a,b\in \mathbb{Z}_k$, and consider the push pattern $\boldsymbol{\ell}_{a,b}$ where $\boldsymbol{\ell}_{a,b}(r)=a$ if $r$ is a shaded region, $\boldsymbol{\ell}_{a,b}(r)=b$ if $r$ is an unshaded region with a plus sign, and $\boldsymbol{\ell}_{a,b}(r)=-b-2a$ (mod $k$) if $r$ is an unshaded region with a minus sign. Then $\boldsymbol{\ell}_{a,b}$ is a null pattern of $M$ for every choice of $a$ and $b$ because $D$ is a reduced knot diagram. Moreover, by Proposition \ref{propab}, all null patterns of $M$ must be in the form of $\boldsymbol{\ell}_{a,b}$.

 If one shaded and one unshaded region are not pushed in a null pattern $\boldsymbol{\ell}_{a,b}$, then $a$ and $b$ must be zero which corresponds to the zero null pattern. Hence $j=1$ and $q=q=k^{n+2-i}$. 

If a set of shaded regions are not pushed in a null pattern $\boldsymbol{\ell}_{a,b}$, then $a=0$ and $b$ can take any value in $\mathbb{Z}_k$. So there are $k$ number of such null patterns. Hence, $j=k$ and  $q=k^{n+1-i}$.

If a set of unshaded regions with the same signs are not pushed in a null pattern $\boldsymbol{\ell}_{a,b}$, then either $b=0$ or $b+2a=0$. In both cases, value of $a$ determines all possible null patterns. So there are $k$ number of such null patterns. Hence, $j=k$ and  $q=k^{n+1-i}$.

 If a set of unshaded regions are not pushed in a null pattern $\boldsymbol{\ell}_{a,b}$ and at least two regions have opposite signs, then $b=0$ and $2a=0$. If $k$ is odd, then $2$ cannot be a zero divisor of $\mathbb{Z}_k$, hence $a=0$ as well. Then $\boldsymbol{\ell}_{a,b}$ corresponds to the zero null pattern, i.e., $j=1$.  On the other hand, if $k$ is even, then $2a=0$ has two solutions $a=0$ and $a=k/2$, which correspond to two null patterns, i.e., $j=2$. Hence, $q=k^{n+2-i}$ if $k$ is odd, and  $q=k^{n+2-i}/2$ if $k$ is even. This completes the proof of claim (3).

Claim (3) implies claim (2) and claim (1) in the case $k<\infty$. Hence it remains to prove claim (1) in the case $k=\infty$. Note that the patterns $\boldsymbol{\ell}_{a,b}$ when $a,b\in \mathbb{Z}$ form the set of all null patterns of any game matrix of $D$ over $\mathbb{Z}$, as well. Hence, for all $k\leq \infty$ the only null pattern of any game matrix of $D$ over $\mathbb{Z}_k$ where one shaded and one unshaded region are not pushed is the trivial pattern $\mathbf{0}$. In other words, Proposition \ref{prop0} still holds true when we replace two adjacent region by one shaded and one unshaded region in the case $D$ is an alternating sign diagram. Therefore, we can repeat the proof of Theorem \ref{propadj} replacing two adjacent regions by one shaded and one unshaded region. Hence, the result follows.




\end{proof}

\section{Game on knot diagrams}\label{sec:assertions}

Recall that in the proof of Lemma \ref{mainlemma}, for a knot diagram $D$ we constructed the link diagram $D'$ by applying the oriented smoothing operation simultaneously to every reducible vertex of $D$. Let us call this operation the \emph{reducing operation} on a knot diagram.

\begin{definition}\normalfont
We call a sub-diagram $P$ of $D$ a \emph{connectedly reducible part} of $D$ if it satisfies the following two conditions:

1) It stays connected after applying the reducing operation to $D$.

2) No sub-diagram containing it stays connected after the reducing operation.
\end{definition}

Equivalently, $P$ is a connectedly reducible part of $D$ if and only if it corresponds to a reduced (disjoint) component of $D'$ after the reducing operation. See Figure \ref{fig:six} for an illustration.

\begin{figure}[H]
\centering
\includegraphics[scale=.2]{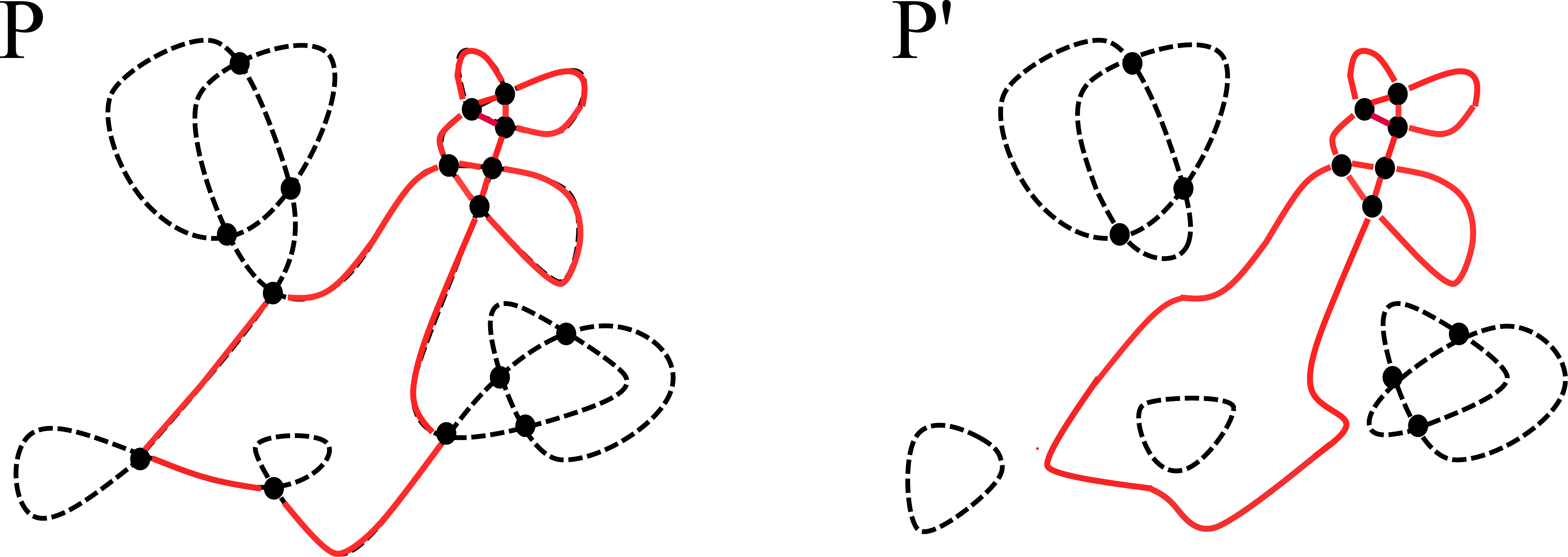}
\caption{A connectedly reducible part of a knot diagram (in red)}
\label{fig:six}

\end{figure}

Now let $P$ be a connectedly reducible part of $D$ and denote the reduced component of $D'$ corresponding to $P$ by $P'$. Then $P$ and $P'$ have the same irreducible vertex-region incidence structure and we can identify their regions as we identify the regions of $D$ and $D'$. Let $M$ be a game matrix of $D$ over $\mathbb{Z}_k$, $k\leq \infty$. Delete all rows of $M$ corresponding to reducible vertices and columns corresponding to the regions whose boundary does not intersect $P$. The resulting matrix, call it $M'$ is a game matrix of $P'$. Note that if $\boldsymbol{\ell}$ is a null pattern of  $M$, then its restriction $\boldsymbol{\ell}_{res}$ to the regions of $P$  is a null pattern of $M'$. Moreover, two different null patterns $\boldsymbol{\ell}$ and $\boldsymbol{\kappa}$ of $M$ cannot have the same restriction. Indeed, if $\boldsymbol{\ell}_{res}=\boldsymbol{\kappa}_{res}$, then $\boldsymbol{\ell}-\boldsymbol{\kappa}$ would be a null pattern of $M$ which is zero on the regions of $P'$, equivalently on the regions of $P$. But $P$ contains at least two adjacent regions of $D$. $\boldsymbol{\ell}=\boldsymbol{\kappa}$ by Proposition \ref{prop0}.

So the restriction map of the null patterns of $M$ to the null patterns of $M'$ is injective. Moreover, it is also surjective. Indeed, let $\boldsymbol{\alpha}$ be a null pattern of $M'$, $r_1$ and $r_2$ be two adjacent regions of $P'$ (equivalently $P$ and $D$). Then, by Proposition \ref{propab}, there is a null pattern $\boldsymbol{\beta}$ of $M$ which agrees with $\boldsymbol{\alpha}$ on $r_1$ and $r_2$. However, then both $\boldsymbol{\alpha}$ and $\boldsymbol{\beta}_{res}$ are null patterns of $M'$ which agree on two adjacent regions. Again by Proposition \ref{propab}, $\boldsymbol{\alpha}=\boldsymbol{\beta}_{res}$. From this one to one correspondence we obtain the following observation.

\emph{Observation}: Let $S$ be a subset of regions of $P'$. Then, the number of null patterns of $M'$ where the regions of $S$ are not pushed is equal to the number of null patterns of $M$ where the regions of $S$ are not pushed.

Theorem \ref{thm2}, \ref{thmp}, and \ref{thmk} determine how many initial color configurations there are which can be solved without pushing certain regions in any version of the $k$-color region select game when played on reduced knot diagrams with some conditions on $k$ or on the diagram. When we look at the proofs we see that this number is determined by the number of null patterns of the corresponding game matrix where these regions are not pushed. Therefore, by the above observation, and together with the fact that $P'$ is a reduced diagram whose regions can be identified by the regions of $P$, the proofs of Theorem \ref{thm2}, \ref{thmp} and \ref{thmk} imply the following result for general knot diagrams.

 \begin{thm}
 Let $D$ be a knot diagram on which we play a version of the $k$-color region select game. Fix a checkerboard shading on $D$. Let $P$ be a connectedly reducible part of $D$. Then all the assertions of Theorem \ref{thm2} for  $k=2$  and Theorem \ref{thmp} for $k<\infty$, hold true if the regions in the assertions belong to $P$.

Assume further that $P$ admits an alternating signing of its unshaded regions. Then all the assertions of Theorem \ref{thmk} for $k\leq\infty$  hold true if the regions in the assertions belong to $P$.
 \end{thm}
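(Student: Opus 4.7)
The plan is to reduce everything to the reduced-diagram case via the Observation stated just before the theorem. That Observation bijects null patterns of the game matrix $M$ of $D$ with null patterns of the game matrix $M'$ of $P'$ in a way that preserves which regions of $P$ (identified with those of $P'$) are not pushed. Combined with Proposition \ref{propmn}, which turns counts of such null patterns into counts of solvable initial color configurations, every assertion of Theorems \ref{thm2}, \ref{thmp}, and \ref{thmk} about $P'$ will transport to $D$ with essentially no change.

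First I would verify that the auxiliary structure needed in those theorems restricts correctly from $D$ to $P'$. The checkerboard shading of $D$ restricts, via the region identification, to a two-coloring of the regions of $P'$; since any two regions of $P'$ meeting across an edge of $P'$ correspond to regions of $P$ meeting across an edge of $D$ (namely an edge at an irreducible vertex that survives the reducing operation), this restriction is itself a checkerboard shading. Under the extra hypothesis of the second half, an alternating $``+,-"$ signing of the unshaded regions of $P$ carries over to a valid alternating signing of the unshaded regions of $P'$, because the reducing operation preserves the vertex-region incidences at every irreducible vertex.

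For the $k<\infty$ assertions I would apply Proposition \ref{propmn} to $D$: the number of initial color configurations of $D$ solvable without pushing a prescribed set $S$ of $i$ regions (all lying in $P$) equals $k^{n+2-i}/j$, where $n$ is the number of vertices of $D$ and $j$ is the number of null patterns of $M$ with the regions of $S$ unpushed. By the Observation, $j$ equals the corresponding count $j'$ for $M'$; and the proofs of Theorems \ref{thm2}, \ref{thmp}, and \ref{thmk} applied to $P'$ are precisely computations of $j'$ for the relevant choices of $S$, yielding values $1$, $2$, or $k$. Substituting back gives the claimed formulas, with $n$ now denoting the vertex count of $D$, and the uniqueness statements correspond exactly to $j=1$.

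The only remaining case is assertion (1) of Theorem \ref{thmk} at $k=\infty$, where Proposition \ref{propmn} is unavailable. Here I would copy the strategy of Theorem \ref{propadj} for $k=\infty$. The $k<\infty$ step just established, applied with $k=q$ for each integer $q>1$, together with the Observation (valid for all $k\leq\infty$), shows that for every such $q$ the only null pattern of $M$ over $\mathbb{Z}_q$ where a prescribed shaded region and a prescribed unshaded region of $P$ are unpushed is the trivial one. Ordering the columns of $M$ so these two regions come last, the remaining $n$ columns of $M$ are linearly independent over $\mathbb{Z}$, and the mod-$q$ reduction argument in the proof of Theorem \ref{propadj} then delivers existence and uniqueness of an integer solving pattern avoiding the two selected regions. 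The main point requiring care is purely bookkeeping: verifying, under the identification of regions, that ``not pushing,'' the checkerboard shading, and the alternating signing all transfer coherently between $D$, $P$, and $P'$; once this is cleanly set up, the rest is a mechanical invocation of the earlier results.
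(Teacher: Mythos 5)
Your proposal matches the paper's argument: the paper likewise derives this theorem from the Observation (the bijection between null patterns of $M$ and of $M'$ that preserves which regions are unpushed) combined with Proposition \ref{propmn} and the computations in the proofs of Theorems \ref{thm2}, \ref{thmp} and \ref{thmk} applied to the reduced component $P'$. Your explicit handling of the $k=\infty$ case via the mod-$q$ reduction from Theorem \ref{propadj}, and your check that the checkerboard shading and alternating signing restrict coherently to $P'$, are details the paper leaves implicit, but the route is the same.
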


\bibliographystyle{plain}

 \end{document}